\definecolor {infocolor} {rgb} {0.6,0.6,0.6}
\definecolor {sepia} {rgb} {0.75,0.30,0.15}
\newcommand {\mathset} [1] {\ensuremath {\mathbb {#1}}}
\newcommand {\R} {\mathset {R}}
\newtheorem {theorem} {Theorem}[section]
\newtheorem {corollary}[theorem] {Corollary}
\newtheorem{definition}[theorem]{Definition}
\newtheorem{lemma}[theorem]{Lemma}
\title{\LARGE  Minimal Controllability Problems}
 \author{Alex Olshevsky\thanks{Alex Olshevsky is with the Department of Industrial and Enterprise Systems Engineering, University of Illinois at Urbana-Champaign, email: \texttt{aolshev2@illinois.edu}}  }
\begin{document}

\maketitle

\begin{abstract} Given a linear system, we consider the problem of finding a small set of variables to affect with an input so
that the resulting system is controllable. We show that this problem is NP-hard; indeed, we show that even approximating the minimum 
number of variables that need to be affected within a multiplicative factor of $c \log n$ is NP-hard for some positive $c$. On the positive side, we 
show it is possible to find sets of variables matching this inapproximability barrier in polynomial time. This can be done by a simple
greedy heuristic which sequentially picks variables to maximize the rank increase of the controllability matrix. 
Experiments on Erdos-Renyi random graphs demonstrate this heuristic almost always succeeds at findings the 
minimum number of variables.
\end{abstract}

\section{Introduction}  This paper considers the problem of controlling large-scale systems where it is 
impractical or impossible to measure or affect more than a small number of  variables. We are motivated by 
the recent emergence of control-theoretic ideas in the analysis of of biological circuits \cite{M-et-al-09},  biochemical
 reaction networks \cite{LSB13}, and systems biology \cite{R-12}, where systems of many reactants
often need to be driven to desirable states with inputs which influence only  a few key locations within the system. 


While controllability and observability aspects of linear and nonlinear systems have been amply studied 
since the introduction of the concept by Kalman \cite{K63}, it appears that with the exception relatively recent work 
little attention has historically been paid to the interaction of 
controllability 
with sparsity. 
There has been much contemporary interest in the control of large networks which have a nearly unlimited number of interacting parts -
from the aforementioned biological systems, to the smart grid \cite{Z09,CI12}, to traffic systems \cite{CC10, CHW11} - leading to the emergence of a budding theory of control over 
networks (see the \cite{OS07, MS08} and the references therein). The enormous size of these systems makes it costly to affect them
with inputs which feed into a nontrivial fraction of the states; similarly, observing the system becomes uniquely challenging since it becomes impractical to measure more than a negligible fraction of the 
variables. These issues  motivate the  study of control  strategies which interact sparsely with systems by measuring and affecting only a small number of  variables. 

We thus consider the following problem: given a linear time-invariant system \[ \dot{x}_i(t)  = \sum_{j = 1}^n a_{ij} x_j(t), ~~~~~ i = 1, \ldots, n, \] where the matrix $A =[a_{ij}]$ is known, find \\  \smallskip (i) a subset ${\cal O}$ of the variables $\{ x_1, \ldots, x_n \} $ to observe which is as small as possible; \\ (ii) a subset of the 
variables ${\cal I}$ to affect with an input which is as small as possible;\\ \smallskip so that the resulting system
\begin{eqnarray*} \dot{x}_i(t)  & =& \sum_{j=1}^n a_{ij} x_j(t)  + u_i(t), ~~~~ i \in {\cal I} \\
 \dot{x}_i(t)  & = & \sum_{j=1}^n a_{ij} x_j(t), ~~~~~~~~~~~~~ i \notin {\cal I} \\ 
y_i(t) & = & x_i(t), ~~~~~~~~~~~~~~~~~~~~~~ i \in {\cal O} 
\end{eqnarray*} is both controllable and observable. 


Due to the duality between controllability and observability, the  problems of looking for the sets ${\cal I}$ and ${\cal O}$ are equivalent to each other and may be considered independently. Thus, to simplify 
matters, we will just consider  the problem of devising an algorithm for finding a set ${\cal I}$ as small as possible which makes the system controllable. In matrix form, we are seeking to find a matrix $B$ which is {\em diagonal} and has as few nonzero entries as possible so that the system  
\begin{equation} \dot{x} = Ax + Bu \label{system} \end{equation}  is controllable. We will also consider a variation on this problem wherein we search for a {\em vector} $b$ which has as few nonzero entries
as possible so that 
the system 
\begin{equation} \dot{x} = Ax + bu \label{vectorsystem} \end{equation} is controllable.  Note that in Eq. (\ref{system}), $u$ is a vector of inputs, 
while in Eq. (\ref{vectorsystem}), $u$ is a scalar. We will refer to all these questions as 
minimal controllability problems\footnote{One may also consider variations in which we search for a matrix $B' \in \R^{n \times n}$ rendering $\dot{x}=Ax+B'u$ controllable
while seeking to minimize either the number of nonzero entries of $B'$, or the number of rows of $B'$ with a nonzero entry (representing the number of
components of the system affected). However, for any such matrix $B'$, we can easily construct a diagonal matrix $B$ rendering the system controllable without increasing the number of nonzero entries, or the number of rows with a nonzero entry: indeed, if the $i$'th row of $B'$ contains a nonzero entry, we simply set $B_{ii}=1$, and else we set $B_{ii}=0$. Consequently, both of these variations are easily seen to be equivalent to the problem of finding the sparsest diagonal matrix. }. 



 Our work follows a number of recent papers on the controllability properties of networks \cite{CMM12, EMCCB12, JLY12, LCWX08, LSB11, LSB13, MEB, nv12, pn-12, pljb12, RJME09, SH13, t04, Motter1, Motter2, pendulum, schuppen, cao-recent} with
similar motivations. These papers sought to find 
connections between the combinatorial properties of graphs corresponding to certain structured matrices $A$
and vectors $b$ and the controllability of Eq. (\ref{system}) and Eq. (\ref{vectorsystem}), either in the direct sense as we consider it or in a structural sense. We refer the interested reader to the recent survey
\cite{EMCCB12} for an overview of this literature. We note that the existence of easily optimizable necessary and sufficient combinatorial conditions for controllability would, 
of course, lead to a solution of the minimal controllability problem we have proposed here; however, to date it is only for matrices $A$ corresponding to 
influence processes on graphs like paths and cycles and grids and cartesian products
that such conditions have been found \cite{CMM12,pn-12, np-recent}.

We mention especially the recent paper \cite{LSB11} as well as the papers \cite{cm, pequito}, which have the a similar starting point as the present paper: finding effective algorithms for sparsely controlling linear systems. Our papers differs from those works primarily in that they consider the problem of making linear systems structurally controllable or strongly structurally controllable, whereas we consider the problem of making them controllable  (in the plain sense). In addition,  we mention that we measure sparsity by the number of nonzero variables of the systems's state vector $x$ we are affecting with the input; by contrast, \cite{LSB11} measures sparsity by counting the number of  driver nodes (see \cite{LSB11} for a definition).

Our paper has two main results. The first of these demonstrates the intractability of the minimal controllability problems we have 
defined and shows
that these problems are even intractable to approximate.  Let us 
adopt the notation ${\cal B}$ for
the set of vectors in $\R^n$ which make Eq. (\ref{vectorsystem}) controllable and ${\cal B}'$ for the set of diagonal matrices in $\R^{n \times n}$ which make Eq. (\ref{system}) controllable. 
We then have the following theorem.

\begin{theorem}  Approximating the number of nonzero entries of the sparsest
vector in ${\cal B}$ within  a multiplicative factor of $c \log n$ is NP-hard for some absolute constant\footnote{An absolute constant is a constant
which does not depend on any of the problem parameters, i.e., in this case it does not depend on $A$ or $n$.} $c>0$; moreover, this 
remains the case even  under the 
additional assumption that the matrix $A$ is symmetric. The same is true for the problem of approximating the 
number of nonzero entries of the sparsest diagonal matrix in ${\cal B}'$. \label{th:hardness}
\end{theorem}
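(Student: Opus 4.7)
The plan is to reduce from Minimum Set Cover, which by Feige's theorem (with strengthenings by Raz--Safra and Dinur--Steurer) is NP-hard to approximate within any factor $c_0 \log n$ for some absolute $c_0 > 0$. The bridge is the Popov--Belevitch--Hautus (PBH) test: for a symmetric $A$ with simple eigenvalues and orthonormal eigenvectors $v_1, \ldots, v_N$, the pair $(A, b)$ is controllable iff $v_i^T b \neq 0$ for every $i$. Thus for a $b$ with generic nonzero entries supported on $T$, controllability holds iff $T \cap \mathrm{supp}(v_i) \neq \emptyset$ for every $i$, and the sparsest controllability problem is precisely a hitting-set problem on the family of eigenvector supports.

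Given a Set Cover instance with universe $[n]$ and collection $S_1, \ldots, S_m$, I would construct a symmetric $A \in \R^{N \times N}$ whose eigenvectors encode the set-cover structure. Set $N = m \cdot 2^\ell$ with $\ell = \lceil \log_2 n \rceil$, indexing coordinates as pairs $(j, k)$ with $j \in [m]$ and $k \in \{0,1\}^\ell$, so each set $S_j$ has $2^\ell$ twin coordinates. Fix an injection $\chi : [n] \to \{0,1\}^\ell$ and define $v_i \in \R^N$ by $v_{i,(j,k)} = (-1)^{\chi(i) \cdot k}$ if $i \in S_j$, and $0$ otherwise. A direct calculation shows $\langle v_i, v_{i'} \rangle = |\{j : i, i' \in S_j\}| \sum_{k \in \{0,1\}^\ell} (-1)^{(\chi(i) \oplus \chi(i')) \cdot k}$, and the character sum vanishes whenever $i \neq i'$, so the $v_i$'s are mutually orthogonal. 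I then complete $\{v_1, \ldots, v_n\}$ to an orthonormal basis $\{v_i\} \cup \{w_j\}$ of $\R^N$ by Gram--Schmidt on a generic starting basis, which makes each $w_j$ have full support generically, and take $A$ to be the symmetric matrix with these orthonormal eigenvectors and pairwise distinct eigenvalues.

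The reduction is now immediate. Any $T$ hitting every $\mathrm{supp}(v_i) = \{(j, k) : i \in S_j\}$ projects under $(j, k) \mapsto j$ to a set cover of $[n]$ of size at most $|T|$; conversely a cover $C$ yields $T = \{(j, 0^\ell) : j \in C\}$ of the same size that hits every $\mathrm{supp}(v_i)$ and trivially hits the full-support $w_j$'s. Hence the sparsest controllability support equals the minimum set cover. Since $N = \mathrm{poly}(n)$, we have $\log N = O(\log n)$, so any $c \log N$ approximation for controllability would yield an $O(c) \log n$ approximation for Set Cover, contradicting Feige's theorem once $c$ is small enough. The diagonal-matrix variant ${\cal B}'$ follows by the same argument, replacing $v_i^T b$ with $v_i^T B$ for diagonal $B$ supported on $T$.

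I expect the main technical obstacle to be the orthogonality constraint for symmetric $A$: two prescribed eigenvector supports that overlap in a single coordinate cannot be realized by orthogonal vectors without collapsing the supports, so \emph{some} blow-up of the set-cover instance is unavoidable in the symmetric case, and the duplication-plus-Walsh-character trick above is what makes it go through with only a polynomial increase in problem size. A secondary concern is verifying the genericity claims (full-support $w_j$'s from Gram--Schmidt, and existence of a $b$ with prescribed support making all inner products nonzero), but these follow from the fact that the "bad" parameter sets are finite unions of proper algebraic subvarieties.
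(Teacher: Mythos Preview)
Your reduction is sound and the Walsh-character trick is a genuinely different (and cleaner) way to enforce orthogonality of the ``set-cover eigenvectors'' than what the paper does. The paper proceeds in three stages: first it handles the \emph{non}-symmetric case by writing down a strictly diagonally dominant matrix $V$ whose rows encode the hitting-set instance plus one extra ``catch-all'' row, and sets $A = V^{-1}\,\mathrm{diag}(1,\ldots)\,V$; it then obtains the diagonal-matrix variant from a simple equivalence lemma (distinct eigenvalues $\Rightarrow$ sparsest $b\in\mathcal{B}$ and sparsest $B\in\mathcal{B}'$ have the same number of nonzeros); finally, for the symmetric case it \emph{adds} ${m+p+1\choose 2}$ columns to $V$ to force the original rows to become pairwise orthogonal, and fills in the extra rows via an explicit extension lemma, ending up with $\widehat A$ satisfying $\mathrm{mc}(\widehat A)/\mathrm{mc}(A)\in[1/3,2]$. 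Your route skips the non-symmetric detour by making the $v_i$ orthogonal from the outset, and the exact equality of optimum with set-cover size (rather than a factor-$3$ sandwich) is a nice bonus.

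The one place where your sketch is thinner than it needs to be is the completion step. You want the $w_j$'s to have full support so that they impose no constraint on $T$; you appeal to genericity of Gram--Schmidt, but a deterministic many-one reduction requires you to \emph{construct} such a basis with rational entries in polynomial time, and ``the bad set is a proper subvariety'' does not by itself deliver that. (Note also that an orthonormal completion will typically have irrational entries; you want orthogonal rows and then $A=V^{-1}DV$ is still symmetric.) The paper sidesteps exactly this issue by (i) adding a single dedicated coordinate, and (ii) proving an explicit extension lemma that produces, in polynomial time, an orthogonal completion in which every new vector is nonzero on that coordinate. In your framework the same patch works: append one coordinate $\ast$ with $v_{i,\ast}=0$, and complete to an orthogonal basis whose new vectors all have nonzero $\ast$-entry; then the optimum controllability support differs from the minimum set cover by at most one, which is harmless for the $c\log n$ inapproximability. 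With that fix in place, your argument goes through and is arguably more direct than the paper's.
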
 

Thus not only is it impossible to compute  in polynomial time the sparsest $b \in {\cal B}$ (or $B \in {\cal B}'$) unless $P=NP$; it is NP-hard to even approximate the number of nonzero entries in the sparsest elements in ${\cal B}$ and ${\cal B}'$ within a multiplicative logarithmic factor.  

At first glance, this is a quite pessimistic result, but we next argue the situation is not as bad as it appears. Indeed,
from the perspective of designing scalable control laws, we don't necessarily need the sparsest input. In terms of our motivating scenario, the important thing is to avoid the worst-case where we use an input which 
affects roughly as many variables as the total number of variables $n$,  which may very well be enormous. 

In short, we need to find  inputs  which control a system of $n$ variables by affecting it in a negligible 
fraction of the variables. Thus we consider to question of finding $b \in {\cal B}$ and $B \in {\cal B}'$ whose number of nonzero 
entries is small relative to the total number of variables $n$. 

Unfortunately, the previous theorem rules out the possibility of finding a vector $b \in {\cal B}$  which has fewer nonzero entries than $c \log n$ times the sparsest vector in ${\cal B}$ for some $c>0$; the same impossibility result holds for the problem of approximating the minimally sparse diagonal 
matrix in ${\cal B}'$.  Nevertheless, we observe that for the purposes of 
controlling large systems, a $c \log n$ approximation to the sparest input will often be sufficient. Indeed, supposing that the sparsest input feeds into 
$p(n)$ components of the vector $x$, a $c \log n$ approximation will feed into at most $c p(n) \log n$ entries. If the system is controllable from a sufficiently small number of entries - specifically, if\footnote{The notation $f(n) = o(g(n))$ means that $\lim_{n \rightarrow \infty} \frac{f(n)}{g(n)} = 0$.}  $p(n) = o( n / \log n)$ - then the resulting control strategy renders a system with $n$ variables controllable from 
an asymptotically negligible $o(n)$ number of inputs. 

Fortunately, finding  $c \log n$-approximately-sparse elements of ${\cal B}$ and ${\cal B'}$ can be done in polynomial time, as our next theorem demonstrates.

\begin{theorem} There exists an algorithm whose running time is polynomial in $n$ and the number of bits in the entries
of $A$ which, under the assumption ${\cal B}$ is nonempty, returns an element in ${\cal B}$  whose number 
of nonzero entries is at most $c' \log n$ times the sparsest element in ${\cal B}$ for some absolute constant $c'>0$. Moreover, the 
same statement is true with ${\cal B}$ replaced by ${\cal B}'$. \label{th:alg}
\end{theorem}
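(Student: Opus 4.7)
The plan is to phrase both versions of the minimal controllability problem as \emph{minimum submodular cover} and apply the classical greedy algorithm of Wolsey.

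For the diagonal matrix problem, I would introduce the set function
\[
f(S) \;=\; \mathrm{rank}\bigl[\, B_S \,\big|\, AB_S \,\big|\, A^{2}B_S \,\big|\, \cdots \,\big|\, A^{n-1}B_S \,\bigr], \qquad S \subseteq \{1,\ldots,n\},
\]
where $B_S$ is the diagonal $0/1$ matrix with support $S$. By the Kalman rank condition, $B_S \in \mathcal{B}'$ exactly when $f(S) = n$. Because the columns of this block matrix form a fixed list grouped into $n$ blocks (block $i$ consisting of $e_i, Ae_i,\ldots,A^{n-1}e_i$), $f$ is the rank function of the linear matroid restricted to the union of the selected blocks, and hence integer-valued, nonnegative, monotone, and submodular, with $f(\emptyset)=0$ and $\max f = n$. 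Finding the sparsest matrix in $\mathcal{B}'$ is therefore the submodular cover problem for $f$, and by Wolsey's theorem the greedy rule --- start with $S=\emptyset$ and at each step add the index $i$ maximizing $f(S\cup\{i\})-f(S)$ --- returns a set of size at most $(1+\ln n)\,|S^{\ast}|$ where $S^\ast$ is the optimum. Each evaluation of $f$ reduces to a single rank computation on an $n\times n^{2}$ rational matrix whose entries have bit length polynomial in that of $A$, so the entire procedure is polynomial in $n$ and the bit length of $A$, which is exactly the claim for $\mathcal{B}'$.

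For the vector problem I would reduce to the matrix one by showing that, under the hypothesis $\mathcal{B}\neq\emptyset$, the two problems have the same optimal support. Nonemptiness of $\mathcal{B}$ forces $A$ to be \emph{cyclic} (its minimal polynomial equals its characteristic polynomial). The key lemma is then: for cyclic $A$, there exists $b$ supported on $S$ with $(A,b)$ controllable \emph{iff} $B_S\in\mathcal{B}'$. One direction is immediate, and for the nontrivial direction I would observe that if $V_S := \mathrm{span}\{e_{i}:i\in S\}$ has $\mathbb{R}^{n}$ as its smallest $A$-invariant containing subspace, then via the rational canonical form the restriction of $A$ to that hull is itself cyclic, and the set of non-cyclic vectors inside $V_S$ is contained in a finite union of proper $A$-invariant subspaces, namely the kernels of the maximal proper divisors of the minimal polynomial; since a real vector space is never the union of finitely many proper subspaces, $V_{S}$ must contain a cyclic vector. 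Running the greedy of the previous paragraph on $f$ therefore yields the desired $(1+\ln n)$-approximate support for $\mathcal{B}$, and a concrete $b$ on this support can be produced by choosing integer entries avoiding the proper algebraic variety $\{b: \det[b\mid Ab\mid\cdots\mid A^{n-1}b]=0\}$, either by a Schwartz--Zippel style random trial with polynomially bounded entries, or deterministically by adding coordinates of $S$ one by one and selecting each new coefficient to avoid the finitely many bad values.

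The main obstacle, as I see it, is the cyclicity lemma linking the vector and matrix formulations: submodularity of rank, the Wolsey $\ln n$ bound, and the bit-complexity analysis are standard. What is less automatic is the algebraic claim that, for cyclic $A$, controllability from all of $V_S$ with a full diagonal input is equivalent to the existence of a single cyclic vector inside $V_S$; this must handle repeated eigenvalues uniformly, and passing through the companion-matrix / rational canonical form picture appears to be the cleanest route.
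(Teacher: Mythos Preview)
Your proposal is correct and takes a genuinely different route from the paper.

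For $\mathcal{B}'$, the paper runs the same greedy algorithm you describe but does not name submodularity; instead it proves the $(1-1/|B'|)$ per-step decrease directly via a subspace-dimension argument (its Lemma~\ref{drop}), which is essentially a bespoke reproof of Wolsey's bound for this particular rank function. Your framing as minimum submodular cover is cleaner and immediately portable. For $\mathcal{B}$, the paper does \emph{not} reduce to $\mathcal{B}'$: it works directly with the Jordan form, defines a notion of ``covered'' generalized left-eigenvectors, proves that $\mathrm{rank}\,C(A,b)$ equals the number of covered vectors, and runs a separate greedy analysis on that combinatorial quantity; derandomization is handled by trying the values $1,\dots,2n{+}1$ at each coordinate and showing via a degree bound on the sum-of-squared-minors polynomial that this matches the generic rank increase. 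Your route---prove that for cyclic $A$ the optimal supports in $\mathcal{B}$ and $\mathcal{B}'$ coincide, then run a single greedy---is more structural and unifies the two problems. Note that your cyclicity lemma is strictly stronger than what the paper proves in this direction: its Lemma~\ref{equiv} establishes the equality of optima only under the hypothesis of \emph{distinct eigenvalues}, whereas you need (and correctly sketch) the full cyclic case via the rational canonical form and the union-of-proper-invariant-subspaces argument.

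The one place your write-up is thin is the deterministic extraction of a concrete $b$ on the support $S$ returned by the greedy. ``Adding coordinates one by one and avoiding the finitely many bad values'' is correct, but you should say how a bad value is \emph{recognized} in polynomial time: having fixed $b_{i_1},\dots,b_{i_{j}}$, the partial assignment extends to a cyclic vector supported on $S$ if and only if $\mathrm{rank}\,C\bigl(A,\,[\,b^{(j)}\mid e_{i_{j+1}}\mid\cdots\mid e_{i_k}\,]\bigr)=n$, which is a single rank computation (and this equivalence is exactly another instance of your cyclicity lemma, applied to the affine slice $b^{(j)}+\mathrm{span}\{e_{i_{j+1}},\dots,e_{i_k}\}$). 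Since the determinant has degree at most $n$ in each new coordinate, trying $n{+}1$ integer values and keeping one that passes this test suffices. Alternatively, the paper's device---try $2n{+}1$ values and pick the one maximizing $\mathrm{rank}\,C(A,b)$---also works here and avoids the extendability test altogether.
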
 

As we will  show, the algorithms which achieve this are simple greedy heuristics which sequentially add nonzero entries to 
maximize the rank increase of the controllability matrix. A central finding of this work, therefore, is that the performance guarantees of such 
simple heuristics cannot, in the most general case, be significantly improved. 

 Moreover, our simulation results on matrices $A$ deriving from  Erdos-Renyi random graphs show that we can usually finds a vector in ${\cal B}$ with only a very small number of nonzero entries: all such grandom graphs we simulated were found to be controllable by a greedy heuristic from one or two nodes. 

We now outline the remainder of the paper. Section \ref{sec:hardness} contains the proof of 
Theorem \ref{th:hardness} and a few of its natural corollaries (for example, we consider the related problem of rendering a linear system both controllable and
observable by measuring and affecting the smallest number of variables). Section \ref{sec:alg} contains the proof of Theorem \ref{th:alg}. We
report on the results of experiments with Erdos-Renyi random graphs in Section \ref{simul} and some conclusions are drawn in Section \ref{sec:concl}.

\section{Intractability results for minimal controllability problems\label{sec:hardness}}

This section is dedicated to the proof of Theorem \ref{th:hardness} and its consequences. We will break the proof of 
this theorem into two parts for simplicity: first we will show that the minimal controllability problem is NP-hard for a general
matrix $A$, and then we will argue that the proof can be extended to symmetric matrices.  

The proof itself proceeds by reduction to the hitting set problem, defined next. 

\begin{definition} Given a collection ${\cal C}$ of subsets of \{1, \ldots, m\}, the minimum hitting set problem 
asks for a set of smallest cardinality that has nonempty intersection with each set in ${\cal C}$. 
\end{definition}

The minimum hitting set problem is NP-hard and moreover is NP-hard to find a set whose cardinality is
within a factor of $c \log n$ of the optimal set, for some $c>0$ (see \cite{mos} for this hardness result for the 
set cover problem, easily seen to be equivalent). Moreover, it is easy to see that we can make 
a few natural assumptions while preserving the NP-hardness of the hitting set problem: we can assume that each set in ${\cal C}$ is nonempty and we can assume that every element appears in at least one set. We will argue that hitting set may be encoded in minimal
controllability, so that the latter must be NP-hard as well. We next give an informal overview of our argument. 

The argument is an application of the PBH test for controllability  which tells us that 
to make Eq. (\ref{system}) controllable, it is necessary and sufficient to choose a vector $b$ that is not orthogonal to all the left-eigenvectors of the matrix $A$. It is easy to see that if $A$ does not have any repeated eigenvalues, this is possible if and only if the support of $b$ intersects
with the support of every left-eigenvector of $A$. Defining ${\cal C}$ to be the collection of supports of the eigenvectors, this
turns into a minimum hitting set problem. 

However, the resulting hitting set problem has some structure, so that we cannot conclude that minimal controllability is
NP-hard just yet; for example, clearly 
the sets in the collection ${\cal C}$ defined in the previous paragraph are the supports of the columns of an invertible matrix, so
they cannot be arbitrarily chosen. In the case where $A$ is symmetric and its 
eigenvectors orthogonal, there is more structure still. The challenge is therefore to encode arbitrary hitting set 
problems within this structure. 

 We now proceed to the formal argument. First, however, we introduce some notation which we will use throughout the
remainder of this paper. We will say that a matrix or a vector is $k$-sparse if it 
has at most $k$ nonzero entries, and we will say that the matrix $A$ is $k$-controllable if there exists a vector $b \in {\cal B}$ which is $k$-sparse which makes Eq. (\ref{system}) controllable. We will sometimes say that a vector $b$ makes $A$ controllable, meaning that
Eq. (\ref{system}) is controllable with this particular $A,b$. We will use ${\bf I}_k$ to denote the $k \times k$ identity matrix,  ${\bf 0}_{k \times l}$ denote the $k \times l$ zero matrix, and  ${\bf e}_{k \times l}$ to mean the $k \times l$ all-ones matrix. Finally, ${\bf e}_i$ will
denote the $i$'th basis vector.

\begin{proof}[Proof of Theorem \ref{th:hardness}, First part] We will first prove the intractability of finding the sparsest vector in ${\cal B}$ without the
assumption that $A$ is symmetric. 

Given a collection ${\cal C}$  of $p$ nonempty subsets of $\{1, \ldots, m\}$ such that
every element appears in at least one set,  we define the incidence matrix $C \in \R^{p \times m}$ where we set $C_{ij} = 1$ if the $i$'th set contains the element $j$, and zero
otherwise. We then define the matrix $V$ in $\R^{(p+m+1) \times (p+m+1)}$ as
\[ V = \begin{pmatrix} 2 {\bf I}_m & {\bf 0}_{m \times p} & {\bf e}_{m \times 1} \\ C & (m+1){\bf I}_p & {\bf 0}_{p \times 1} \\ {\bf 0}_{1 \times m} & {\bf 0}_{1 \times p} & 1 \end{pmatrix} \] By construction, $V$ is strictly  diagonally dominant, and therefore invertible. We set 
\[ A({\cal C}) = V^{-1} {\rm diag}(1, \ldots, m+p+1 )  V \] Note that $A({\cal C})$ has distinct eigenvalues and its left-eigenvectors are the 
rows of $V$. Moreover $A$ may be constructed in polynomial-time from the collection ${\cal C}$. Indeed,  since
every element appears in at least one set,  we have that it takes at least $\max(m,p)$ bits to describe the set collection ${\cal C}$, and  
inverting $V$ and performing the necessary multiplications to construct $A$ can be done in polynomial time in $m,p$ \cite{S98}. 

{\em We claim that ${\cal C}$ has a hitting set with cardinality $k$ if and only if 
$A({\cal C})$ is $k+1$-controllable. }

Indeed, suppose that a hitting set $S'$ of cardinality $k$ exists for ${\cal C}$. Set $b_i=1$ for all $i \in S'$ and $b_{m+p+1}=1$; set all other entries of $b$ 
to zero. By construction, $b$ has positive inner product with every row of $V$. The PBH
test then implies that the system of Eq. \eqref{system} with this $b$ is controllable. 

Conversely, suppose  the system is controllable with a $k+1$-sparse vector $b$. Once again we appeal to the 
PBH condition, which gives that $b$ is not orthogonal to any row of $V$. Since $V$ is nonnegative, we 
may take the absolute value of every entry of  $b$: the PBH test implies this operation preserves the controllability of 
Eq. \eqref{system}. Moreover,  the PBH condition for $A({\cal C})$ and nonnegative $b$  is that
for any row $i$ of $V$, there is some index $j$ such that both $V_{ij}$ and $b_j$ are positive. We will refer to this as the 
intersection property. 

By considering the last row of $V$, we see that the intersection property immediately implies that $b_{m+p+1}$ is positive. We now argue that we can find a $k+1$-sparse vector $b$ whose
support is contained in $\{1, \ldots, m \} \cup \{m+p+1\}$ such that the system of Eq. (\ref{system}) is controllable. Indeed, if $b_i > 0$ for some $i \notin  \{1, \ldots, m \} \cup \{m+p+1\}$ then
we modify $b$ by setting first $b_i=0$ and then $b_l =1$ for any index $l \in \{1, \ldots, m \} $ such that $V_{il}>0$, i.e. any $l$ belonging to the 
$i$'th set of ${\cal C}$. In so doing, we preserve the controllability of Eq. \eqref{system} since, by construction, for $i \notin  \{1, \ldots, m \} \cup \{m+p+1\}$ no row besides $i$ has $i$'th entry positive, so the intersection condition still holds. Moreover, the vector $b$ certainly remains $k+1$-sparse. 

Proceeding in this way for every $i \notin  \{1, \ldots, m \} \cup \{m+p+1\}$ we finally get a $k+1$-sparse vector $b$ whose support is contained
in $ \{1, \ldots, m \} \cup \{m+p+1\}$ and which makes Eq. \eqref{system} controllable. The intersection property then implies that for each $i=1, \ldots, p$ there is some index $j$ such that
$b_j > 0$ and the $m+i$'th row of $V$ is positive on the $j$'th entry. By construction, this means that ${\rm supp}(b) \cap \{1, \ldots, m\}$ is a 
hitting set for ${\cal C}$. Since $b$ is $k+1$-sparse and $b_{m+p+1}>0$, this hitting set has cardinality at most $k$. 

This concludes the proof of the italicized claim above that the solution of the minimal controllability problem is one more than the
size of the smallest hitting set for ${\cal C}$. 

Thus, if we could approximate the minimal controllability problem for $A \in \R^{n \times n}$ to a multiplicative factor of $c \log n$ for some $c>0$ in polynomial time, we could achieve a $c \log (m+p+1)$-multiplicative approximation to hitting set on $p$ subsets of $\{1, \ldots, m\}$. But, due to known results on the hardness of hitting set \cite{mos}, for some values of $c$ this would imply $P=NP$. 
 \end{proof}

We have thus proved the intractability of finding the sparsest vector in ${\cal B}$, without the assumption that $A$ is symmetric. We next 
proceed to argue that the problem of finding the sparsest matrix in ${\cal B}'$ is intractable as well, still without the assumption of symmetry of 
$A$. This will follow fairly straightforwardly out of the following lemma. 

\smallskip

\begin{lemma} If $A$ has distinct eigenvalues, then the number of nonzero entries in the sparsest vector in ${\cal B}$ is the same as the number 
of nonzero entries in the sparsest matrix in ${\cal B}'$. \label{equiv}
\end{lemma}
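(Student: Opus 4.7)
My plan is to prove both inequalities via the PBH eigenvector test, which under the distinct-eigenvalue hypothesis has a clean form: each eigenvalue admits a unique left eigenvector up to scaling, and PBH says that $b$ makes $(A,b)$ controllable iff $v^T b \neq 0$ for every left eigenvector $v$ of $A$, while a diagonal $B$ makes $(A,B)$ controllable iff for every such $v$ there is some $i \in {\rm supp}(B) := \{i : B_{ii} \neq 0\}$ with $v_i \neq 0$.

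For the easy direction, that the sparsest diagonal matrix in ${\cal B}'$ has no more nonzero entries than the sparsest vector in ${\cal B}$, I would start with a $k$-sparse $b \in {\cal B}$ and form the diagonal $B$ with $B_{ii} = 1$ on ${\rm supp}(b)$ and $B_{ii} = 0$ elsewhere. If some left eigenvector $v$ satisfied $v_i = 0$ for all $i \in {\rm supp}(b)$, then $v^T b = 0$, contradicting $b \in {\cal B}$; hence $B \in {\cal B}'$ and has exactly $k$ nonzero entries.

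For the converse, I would be given a diagonal $B \in {\cal B}'$ of support $S$ with $|S| = k$ and want a $k$-sparse $b \in {\cal B}$ with ${\rm supp}(b) \subseteq S$. Let $W = \{b \in \R^n : {\rm supp}(b) \subseteq S\}$, a coordinate subspace of dimension $k$. Enumerate the $n$ left eigenvectors $v_1, \ldots, v_n$; for each $j$ the set $H_j = \{b \in W : v_j^T b = 0\}$ is a linear subspace of $W$, and it is proper because the PBH certificate for $B$ supplies some $i \in S$ with $(v_j)_i \neq 0$, making $b \mapsto v_j^T b$ a nonzero linear functional on $W$. Since $\R$ is infinite, $W$ cannot be the union of finitely many proper subspaces, so there exists $b \in W \setminus \bigcup_j H_j$; by PBH this $b$ lies in ${\cal B}$, and the containment ${\rm supp}(b) \subseteq S$ makes it $k$-sparse.

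The main obstacle is precisely this converse direction, and it is where the distinct-eigenvalue hypothesis is essential: it reduces the PBH condition to a finite list of linear non-orthogonality constraints on the $k$-dimensional coordinate subspace $W$, each of which is individually nontrivial, so a generic choice in $W$ satisfies all of them simultaneously. Without distinctness, one would have to grapple with non-orthogonality against entire generalized-eigenspaces (i.e., Jordan chains), and the two sparsity numbers need not coincide.
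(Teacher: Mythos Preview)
Your proof is correct and follows the same overall strategy as the paper: both directions rest on the PBH test, and the crucial observation in the converse direction is that $B\in{\cal B}'$ forces each left eigenvector of $A$ to have a nonzero entry on some index in ${\rm supp}(B)$, so the linear functional $b\mapsto v^Tb$ is nontrivial on the coordinate subspace $W$. The only difference is how you conclude that some $b\in W$ avoids all the bad hyperplanes simultaneously. The paper sets each $b_j$ for $j\in{\rm supp}(B)$ to an i.i.d.\ standard Gaussian and argues that with probability one no eigenvector is orthogonal to $b$; you instead invoke the elementary fact that a real vector space is never a finite union of proper subspaces. Your argument is a bit cleaner in that it avoids probability altogether, while the paper's probabilistic phrasing has the virtue of meshing with the randomized greedy algorithm developed later in Section~\ref{sec:alg}.
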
 

\smallskip

\begin{proof}  For one direction, observe that any $b \in {\cal B}$ can be made into a diagonal matrix $B$ by putting the entries of $b$ as diagonal 
entries. The PBH theorem implies that for every left-eigenvector $v$ of $A$, there exists at least one index $j$ with $v_j \neq 0, b_j \neq 0$, and 
consequently there will be a column of $B$ to which $v$ is not orthogonal, implying that $B \in {\cal B'}$. We have thus proved that if a $b \in {\cal B}$ is $k$-sparse,
then we can find a $k$-sparse $B \in {\cal B}'$. 

For the other direction, given a $B \in {\cal B}'$, we can come up with a vector $b$ by setting $b_j$ to an i.i.d. standard Gaussian whenever
$B_{jj} \neq 0$. Since for every left-eigenvector $v$ of $A$, there is some index $j$ with $v_j \neq 0, b_j \neq 0$, we have that  $v^T b$ is a linear combination of continuous random variables with at least one positive coefficient, and consequently it is not equal to zero with probability $1$. 
Appealing now to the fact that $A$ has distinct eigenvalues, and consequently $n$ distinct left-eigenvectors after normalization, we get that with probability 
$1$ no left-eigenvector of $A$ is orthogonal to $b$. This implies that if there exists $B \in {\cal B}'$ which is $k$-sparse, then there exists at least one $k$-sparse $b \in {\cal B}$.  
\end{proof}

\smallskip

\begin{proof}[Proof of Theorem \ref{th:hardness}, Second part] We now argue that finding the sparsest matrix in ${\cal B}'$ is intractable. Indeed, since the matrix $A({\cal C})$ constructed the first part of the proof has distinct eigenvalues, we have that Lemma \ref{equiv} implies that the sparsest vector in ${\cal B}$ for this $A$ has the same number of nonzero entries as the sparsest matrix in ${\cal B}'$. Consequently, the intractability guarantees of hitting set extend to the problem of finding the sparsest matrix in ${\cal B}'$.  
\end{proof}

\smallskip 

This proves Theorem \ref{th:hardness} without the additional assumption that $A$ is a symmetric matrix. Before proceeding
to describe the proof in the symmetric case, we illustrate the construction with a simple example. 

\smallskip

\noindent {\bf Example:} Suppose $${\cal C} =  \{1,2\}, \{2,3\}, \{1,3\}, \{1,2,3\}.$$  Here $m=3$ and $p=4$. The smallest hitting set is of size $2$, but there is no hitting set of size $1$. The incidence matrix is 
\[ C = \begin{pmatrix} 
1 & 1 & 0 \\
0 & 1 & 1 \\
1 & 0 & 1 \\
1 & 1 & 1  
\end{pmatrix} \] and \[ V =  \begin{pmatrix} 
2 & 0 & 0 & 0 & 0 & 0 & 0 & 1 \\
0 & 2 & 0 & 0 & 0 & 0 & 0 & 1 \\
0 & 0 & 2 & 0 & 0 & 0 & 0 & 1  \\
1 & 1 & 0 & 4 & 0 & 0 & 0 & 0 \\
0 & 1 & 1 & 0 & 4 & 0 & 0 & 0 \\
1 & 0 & 1 & 0 & 0 & 4 & 0 & 0 \\
1 & 1 & 1 & 0 & 0 & 0 & 4 & 0 \\
0 & 0 & 0 & 0 & 0 & 0 & 0 & 1 
\end{pmatrix} \] and  $A = V^{-1} {\rm diag}(1,2,3,4,5,6,7,8) V$ is 
\[ A =  \begin{pmatrix} 
1 & 0 & 0 & 0 & 0 & 0 & 0 & -7/2 \\
0 & 2 & 0 & 0 & 0 & 0 & 0 & -3 \\
0 & 0 & 3 & 0 & 0 & 0 & 0 & -5/2  \\
3/4 & 1/2 & 0 & 4 & 0 & 0 & 0 & 13/8 \\
0 & 3/4 & 1/2 & 0 & 5 & 0 & 0 & 11/8 \\
5/4 & 0 & 3/4 & 0 & 0 & 6 & 0 & 3/2 \\
3/2 & 5/4 & 1 & 0 & 0 & 0 & 7 & 9/4 \\
0 & 0 & 0 & 0 & 0 & 0 & 0 & 8 
\end{pmatrix} \]  Direct computation shows that choosing $b = \begin{pmatrix} 1 & 1 & 0 & 0 & 0 & 0 & 0 &  1 \end{pmatrix}^T$ makes
the system controllable. Moreover,  since it is easy to see that no hitting set for ${\cal C}$ of size one exists, and since in the course of our proof we have shown that the existence of a hitting set of size $k$ for ${\cal C}$ is equivalent to $k+1$ controllability of $A$, we know that no vector sparser than $b$ renders $A$ controllable. 

We now proceed with the proof of Theorem \ref{th:hardness}, which remains to be shown in the case the matrix is
symmetric. For this we will need the following lemma, which is a minor modification of known facts about the 
Gram-Schmidt process.

\begin{lemma} \label{extension} Given a collection of $k \geq 1$ orthogonal vectors $v_1, \ldots, v_k$ all in ${\bf e}_1^\perp$ in $\R^n$, whose 
entries are rational with bit-size $B$, 
it is possible to find in polynomial time in $n,B$ vectors $v_{k+1}, \ldots, v_n$ such that: (i) $v_1, \ldots, v_n$ is an orthogonal
collection of vectors (ii) the first component of each of the vectors $v_{k+1}, \ldots, v_n$ is nonzero. 
\end{lemma}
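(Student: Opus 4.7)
The plan is a two-step construction. First I would extend $v_1,\dots,v_k$ to an orthogonal basis of $\R^n$ via Gram--Schmidt, obtaining an intermediate basis in which only one of the newly added vectors has a nonzero first component; then I would recombine the new vectors via a weighted Helmert-style change of basis so that every one of them acquires a nonzero first component while mutual orthogonality (and orthogonality to the $v_i$'s) is preserved.

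For the Gram--Schmidt step, observe that $v_1,\dots,v_k \in \e_1^\perp$, so $\e_1$ is automatically orthogonal to each $v_i$. Apply Gram--Schmidt to the sequence $v_1,\dots,v_k,\e_1,\e_2,\dots,\e_n$, discarding any zero outputs. The first $k$ vectors are unchanged, the next output is $u_1=\e_1$, and the remaining outputs $u_2,\dots,u_{n-k}$ come from projecting $\e_2,\dots,\e_n$ off previously built vectors. Each $u_i$ with $i\ge 2$ is orthogonal to $u_1=\e_1$ by construction, so its first entry is zero. Standard bit-complexity bounds for rational Gram--Schmidt keep the $u_i$'s rational with bit-size polynomial in $n$ and $B$.

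For the recombination, let $m=n-k$, $d_i=\|u_i\|^2$ (so $d_1=1$ and all $d_i$ are positive rationals of polynomial bit-size), and $D=\mathrm{diag}(d_1,\dots,d_m)$. Define $c_1,\dots,c_m \in \mathbb{Q}^m$ by $c_1=(1,1,\dots,1)^T$ and, for $j \ge 2$, let $c_j$ have its first $j-1$ entries equal to $1$, its $j$-th entry equal to $-\sum_{k<j} d_k/d_j$, and all remaining entries equal to $0$. A direct calculation (essentially a Helmert matrix reweighted by $D$) shows that $c_i^T D c_j = 0$ for $i \ne j$ while every $c_j$ has first entry equal to $1$. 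Now set $w_j := \sum_{i=1}^m (c_j)_i\, u_i$ for $j=1,\dots,m$. The orthogonality of the $u_i$'s gives $\langle w_j, w_{j'}\rangle = \sum_i (c_j)_i (c_{j'})_i \|u_i\|^2 = c_j^T D c_{j'} = 0$ whenever $j \ne j'$; the fact that $(u_i)_1=0$ for $i \ge 2$ while $(u_1)_1=1$ gives $(w_j)_1 = (c_j)_1 = 1 \ne 0$; and each $w_j$ lies in $\mathrm{span}(u_1,\dots,u_m)$, so is orthogonal to every $v_l$. Taking $v_{k+j}:=w_j$ completes the extension, and since each entry of $c_j$ is $1$ or a ratio of short sums of the $d_k$, the $w_j$'s inherit polynomial bit-size. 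The only step needing genuine care is the bit-complexity of the Gram--Schmidt phase, which is classical; the recombination is a closed-form rational construction.
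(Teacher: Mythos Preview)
Your proof is correct. Both you and the paper follow the same two-phase strategy: first extend $v_1,\dots,v_k$ to an orthogonal basis by Gram--Schmidt starting with $\e_1$ (so that among the new vectors only $u_1=\e_1$ has nonzero first entry), and then recombine the new vectors so that all of them pick up a nonzero first entry while remaining mutually orthogonal and orthogonal to the original $v_i$'s.

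The difference lies entirely in the recombination step. The paper does it iteratively: at each stage it takes the distinguished vector $v_{k+1}$ (with first entry~$1$) and one remaining vector $v_l$ with first entry~$0$, and replaces the pair by $c\,v_l+v_{k+1}$ and $-v_l+v_{k+1}$ with $c=\|v_{k+1}\|^2/\|v_l\|^2$; this preserves orthogonality and gives both new vectors first entry~$1$. Your construction instead writes down the recombination in one shot via a weighted Helmert basis: the vectors $c_j$ are $D$-orthogonal by a telescoping computation, and since only $u_1$ contributes to the first coordinate, each $w_j$ inherits $(w_j)_1=(c_j)_1=1$. Your closed-form version is arguably cleaner and makes the polynomial bit-size bound more transparent, since the entries of each $c_j$ are explicit short rational expressions in the $d_i$; the paper's iterative scheme has sequential dependencies in $v_{k+1}$, so one has to track bit growth across $n-k-1$ rounds (still polynomial, but less immediate). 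Either route proves the lemma.
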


\begin{proof} Set $v_{k+1}={\bf e}_1$ and use Gram-Schmidt to find $v_{k+2}, \ldots, v_n$ so that 
$v_1, \ldots, v_n$ is an orthogonal basis for $\R^n$. Now let ${\cal S}$ be the set of those vectors among $v_{k+1}, \ldots, v_n$ which 
have zero first entry; clearly we have ${\cal S}=\{k+2, k+3, \ldots, n\}$ since $v_{k+1}$ has first entry of $1$ and zeros in all other entries and $v_{k+2}, \ldots, v_n$ are orthogonal to $v_{k+1}$.

Now given a collection $v_1, \ldots, v_n$ such that $v_{k+1}$ has first entry of $1$, we describe an operation
that decreases the size of ${\cal S}$.  Pick any $v_l \in {\cal S}$ and update as
\begin{eqnarray*} v_l & \leftarrow & c v_l + v_{k+1} \\ 
v_{k+1} & \leftarrow & - v_l + v_{k+1} 
\end{eqnarray*} where $c = ||v_{k+1}||_2^2/||v_l||_2^2$. Observe that the collection $v_1, \ldots, v_n$ remains orthogonal after this update and that the new
vectors $v_{k+1}, v_l$ both have $1$ as their first entry. 

As long as there remains a vector in $S$, the above operation decreases the cardinality of ${\cal S}$ by $1$. It follows that
we can make ${\cal S}$ empty, which is what we needed to show. It is easy to see that the procedure takes polynomial-time
in $n,B$. 
\end{proof}

We now complete the proof of Theorem \ref{th:hardness}.

\begin{proof}[Proof of Theorem \ref{th:hardness}, Third part] Let us introduce the notation ${\rm mc}(X)$ to mean the
number of nonzero entries in a solution $b$ of the minimal controllability problem with the matrix $X$. We will next construct, 
in polynomial time, a symmetric matrix $\widehat A$ with distinct eigenvalues and prove that it satisfies the inequality ${\rm mc}(\widehat A)/{\rm mc}(A) \in [1/3,2]$ (where $A$ is the matrix constructed in the first part of the proof).
Once this has been established we will then have that since approximating ${\rm mc}(A)$ 
to within a multiplicative factor of $c \log n$ is NP-hard for some $c>0$, the same holds for ${\rm mc}(\widehat A)$.  Due to Lemma \ref{equiv}, this
also implies the corresponding hardness of approximating the sparsest diagonal matrix which makes $\widehat A$ controllable. 

We will construct $\widehat A$ as follows: we will take $V$ and add ${m+p+1 \choose 2}+1$ columns to it; then we will add a corresponding number of rows
to make the resulting matrix square.  We will call the resulting matrix $\widehat V$; we have that $\widehat V \in \R^{r \times r}$ where $r = m+p+2 + {m + p + 1 \choose 2}$.

We now describe how we fill in the extra entries of $\widehat V$. Let us index the first ${m+p+1 \choose 2}$ additional columns by $\{i,j\}$ for $i,j = 1, \ldots, m+p+1$. If the $i$'th and $j$'th rows of $V$ have nonzero inner product, we set one of $\widehat V_{i, \{i,j\}}, \widehat V_{j, \{i,j\}}$ to $1$ and the other to 
the negative of the inner product with the $i$'th and $j$'th rows of $V$; else, we set both of them to zero. All other additional entries in the first 
$m+p+1$ rows of $\widehat V$ will be set to zero. Note that by construction the first $m+p+1$ rows of $\widehat V$ are orthogonal.

As for the extra rows of $\widehat V$, we will fill them in using Lemma \ref{extension} to be orthogonal to the existing rows of
$V$ and have a nonzero entry in the $r$'th coordinate. By construction the rows of  $\widehat V$ are orthogonal.  Finally, $\widehat A = \widehat V^{-1} {\rm diag}(1,2,\ldots,r) \widehat V$. Note that $A$ is symmetric and the  the left-eigenvectors of $\widehat A$ are the rows of $\widehat V$. 

Now that we have constructed $\widehat A$, we turn to the proof of the assertion that ${\rm mc}(\widehat A)/{\rm mc}(A) \in [1/3,2]$. 
Indeed, suppose $A$ is $k$-controllable, i.e., there exists a $k$-sparse vector $b \in \R^{m+p+1}$ which makes Eq. (\ref{system}) 
controllable. We then define a $k+1$-sparse vector $\widehat b \in \R^r$ by setting its 
first $m+p+1$ entries to the entries of $b$ and setting its $r$'th entry to a random number generated according to 
any continuous probability distribution, say a standard Gaussian distribution; the rest of the entries
of $\widehat b$ are zero. By construction, rows $1, \ldots, m+p+1$ of $\widehat V$ are not orthogonal to $b$; and 
the probability that any other row is orthogonal to $\widehat b$ is zero. We have thus generated a $k+1$-sparse vector
$\widehat b$ which makes $\widehat A$ controllable with probability $1$. We thus conclude that there exists a $k+1$
sparse vector which makes $\widehat A$ controllable, for if such a vector did not exist, the probability that $\widehat b$ makes
$\widehat A$ controllable would be zero, not one. Finally, noting that $(k+1)/k \leq 2$, we conclude that ${\rm mc}(\widehat A) \leq 2 {\rm mc}(A)$. 

For the other direction, suppose now that $\widehat{A}$ is controllable with a $k$-sparse vector $\widehat b$. We argue that there exists 
a $3k$ sparse vector $b'$ making $\widehat A$ controllable which has the property that $b_j'=0$ for all $j \notin \{1, \ldots, m+p+1\} \cup \{ r \}$.

Indeed, we now describe how we can manipulate $\widehat b$ to obtain $b'$. Let ${\cal S}$ be the support of $\widehat b$. If some element $j$ not in
$\{1, \ldots, m+p+1\} \cup \{ r \}$ is in ${\cal S}$, we remove it by setting $b_j=0$; now observe that at most two of 
the first $m+p+1$ rows of $ \widehat V$
have a nonzero entry in the $j$'th place; we add two elements   in $\{1, \ldots, m+p+1\}$ to ${\cal S}$, one from the support of each of those two rows within $\{1, \ldots, m+p+1\}$,  by setting the corresponding entries of $b$ to a continuous positive random variable. Specifically, suppose that $j \in {\cal S}$ for some $j \notin \{1, \ldots, m+p+1\} \cup \{r\}$, and   $\widehat V_{i_1,j}>0$ and $\widehat V_{i_2,j}>0$ for $i_1, i_2 \in \{1, \ldots, m+p+1\}$ are the two rows among the first $m+p+1$ with a nonzero coordinate in the $j$'th spot; we then pick any index $k_1 \in \{1, \ldots, m+p+1\}$ in the support of the $i_1$'st row of $\widehat V$ and index $k_2 \in \{1, \ldots, m+p+1\}$ in the support of the $i_2$'nd row of $\widehat V$ and  we set 
set $b_{k_1}$ and $b_{k_2}$ to be, say,  independent uniform random variables on $[1,2]$. Finally, we add
$r$ to ${\cal S}$ by setting $b_r$ to a positive continuous random variable, to make sure that the rows $m+p+2, \ldots, r$ have supports which overlap with the support of $b$. 

Proceeding this way, we end up with a support ${\cal S}$ which is at most three times as many elements as it had initially - because each time we remove an index we add at most three indices -  but which has no element outside of $\{1, \ldots, m+p+1\} \cup \{ r \}$.  Finally, we note that for any row of $\widehat V$, the probability that it is orthogonal to $b'$ is zero. Thus with probability one, the vector
$b'$ makes $\widehat A$ controllable. Consequently, we conclude there exists a 
$3k$ sparse vector that makes $\widehat A$ controllable.

It now follows that the vector $b \in \R^{m+p+1}$ obtained by taking the first $m+p+1$ entries of $b'$ makes $A$ controllable. Thus
${\rm mc}(A) \leq 3 {\rm mc} (\widehat A)$.  This concludes the proof.
\end{proof}

We now proceed to state and prove some natural corollaries of Theorem \ref{th:hardness}. First, we note that altering the dimension of $B$ does not make a difference .

\begin{corollary} For any $l \geq 0$, finding the sparsest $B \in \R^{n \times l}$ such that Eq. (\ref{system}) is controllable is NP-hard.  
\end{corollary}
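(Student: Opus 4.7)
The plan is to reduce the corollary to the NP-hardness of the vector version of minimal controllability proved in the first part of Theorem~\ref{th:hardness}.  The case $l=0$ is vacuous since $B$ is then empty and no system with $n \geq 1$ can be made controllable; I therefore focus on $l \geq 1$.

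Take $A = A({\cal C})$ as in the first part of the proof of Theorem~\ref{th:hardness}; recall that $A$ has distinct eigenvalues.  Let ${\rm mc}(A)$ denote the number of nonzero entries in the sparsest $b \in {\cal B}$, and let ${\rm mc}_l(A)$ denote the number of nonzero entries in the sparsest $B \in \R^{n \times l}$ making $(A,B)$ controllable.  I would show ${\rm mc}_l(A) = {\rm mc}(A)$, at which point the NP-hardness conclusion follows immediately from Theorem~\ref{th:hardness}.

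The inequality ${\rm mc}_l(A) \leq {\rm mc}(A)$ is immediate: any $k$-sparse $b \in {\cal B}$ padded with $l-1$ zero columns yields a $k$-sparse matrix $B \in \R^{n \times l}$ for which $(A,B)$ remains controllable.  For the reverse direction I follow the logic sketched in the footnote accompanying the problem definition: given a $k$-sparse $B$ with $(A,B)$ controllable, let $S$ be the set of rows of $B$ containing at least one nonzero entry, so that $|S| \leq k$.  Define the diagonal $B'$ by $B'_{ii}=1$ for $i \in S$ and $B'_{ii}=0$ otherwise.  Using the PBH test together with the distinctness of the eigenvalues of $A$, I verify that $(A,B')$ is controllable: for any left-eigenvector $v$ of $A$, the non-vanishing of $v^T B$ produces indices $i,j$ with $v_i \neq 0$ and $B_{ij} \neq 0$, hence $i \in S$, and then $(v^T B')_i = v_i B'_{ii} = v_i \neq 0$, so $v^T B' \neq 0$.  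Thus $B' \in {\cal B}'$ is $|S|$-sparse, and Lemma~\ref{equiv} (which applies because $A$ has distinct eigenvalues) furnishes an $|S|$-sparse vector in ${\cal B}$, so ${\rm mc}(A) \leq |S| \leq k$.

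Combining the two inequalities gives ${\rm mc}_l(A) = {\rm mc}(A)$, so a polynomial-time algorithm for the sparsest $B \in \R^{n \times l}$ would solve the vector problem on the same $A$, contradicting Theorem~\ref{th:hardness}.  The main subtle point is the PBH-based verification that collapsing $B$ to the diagonal $B'$ preserves controllability; this step relies critically on $A$ having distinct eigenvalues, which is guaranteed by the construction of $A({\cal C})$ in Theorem~\ref{th:hardness}.  I do not anticipate any further obstacle.
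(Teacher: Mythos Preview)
Your proposal is correct and follows essentially the same approach as the paper: both reduce to the vector problem on the matrix $A({\cal C})$ with distinct eigenvalues, pad $b$ with zero columns for one direction, and for the converse collapse a $k$-sparse $B$ to its nonzero-row support. The only cosmetic difference is that the paper goes directly from $B$ to a vector $b$ by placing independent Gaussians on the nonzero rows (repeating the proof of Lemma~\ref{equiv} inline), whereas you first pass through the diagonal $B'$ and then invoke Lemma~\ref{equiv} as a black box; the content is the same.
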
 

\begin{proof} We argue that for $A$ with distinct eigenvalues this is identical to the problem of finding a vector $b$ making the system controllable. Since we have already made a similar argument in the proof of the third part of Theorem \ref{th:hardness}, we will only sketch the argument here. Indeed, if $A$ is a $k$-controllable
with a vector $b$, it is certainly $k$-controllable with a matrix $B \in \R^{n \times l}$ - one can just put this vector as the first column of the matrix. Conversely, if such an $A$ is $k$-controllable with a matrix $B \in \R^{n \times l}$, it is $k$-controllable with a vector $b$ obtained by setting $b_i$ to a standard Gaussian whenever $B_{ij} \neq 0$ for some $j$.  
\end{proof}

\smallskip

A variation of the minimal controllability involves finding matrices $B$ and $C$ that make the system simultaneously controllable and
observable, with as few nonzero entries as possible. Unfortunately, the next corollary proves an intractability result in
this scenario as well. 

\begin{corollary} Finding two diagonal matrices $B$ and $C$ in $\R^{n \times n}$ with the smallest total number
of nonzero entries such that the 
system \begin{eqnarray} \dot{x} & = &  Ax + Bu \nonumber \\  y & =&  Cx  \label{obs_system} \end{eqnarray} is both controllable and observable 
is NP-hard. Similarly, for any $l_1, l_2 \geq 0$ finding two matrices $B \in \R^{n \times l_1}, C \in \R^{l_2 \times n}$ with the smallest total number of nonzero entries so that Eq. (\ref{obs_system})
is both controllable and observable is NP-hard. 
\end{corollary}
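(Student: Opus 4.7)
My plan is to reduce directly to Theorem \ref{th:hardness} by exploiting the fact that, once $A$ is fixed, the input matrix $B$ and the output matrix $C$ decouple completely in the joint problem: $B$ appears only in the controllability condition, and $C$ appears only in the observability condition. Concretely, I would instantiate $A = \widehat A$, the symmetric matrix constructed in the third part of the proof of Theorem \ref{th:hardness}, and argue that the minimum total number of nonzero entries in the pair $(B,C)$ equals exactly twice ${\rm mc}(\widehat A)$.

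The decoupling gives
\[ \min_{B,C} \bigl( \|B\|_0 + \|C\|_0 \bigr) \;=\; \min_{B} \|B\|_0 \;+\; \min_{C} \|C\|_0, \]
where on the left $(B,C)$ ranges over pairs of diagonal matrices making Eq.~(\ref{obs_system}) both controllable and observable, and on the right $B$ (resp.\ $C$) ranges over diagonal matrices making the system controllable (resp.\ observable). By the PBH test, the controllability condition on a diagonal $B$ requires that the diagonal of $B$ have support intersecting that of every left-eigenvector of $\widehat A$, while the observability condition on a diagonal $C$ requires the same with respect to right-eigenvectors. Since $\widehat A$ is symmetric, its left and right eigenvectors coincide, so the two minima are equal and both equal ${\rm mc}(\widehat A)$. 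Hence the joint minimum equals $2\,{\rm mc}(\widehat A)$, and since Theorem \ref{th:hardness} makes computing ${\rm mc}(\widehat A)$ NP-hard, so is computing twice it.

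For the second assertion with $B \in \R^{n \times l_1}$ and $C \in \R^{l_2 \times n}$, the same decoupling holds verbatim. The preceding corollary extends the NP-hardness of ${\rm mc}(\widehat A)$ from diagonal to rectangular $B$, and its transpose dual (using that $(A,C)$ observable iff $(A^T, C^T)$ controllable, and that transposition preserves the number of nonzero entries) does the same for $C$; symmetry of $\widehat A$ again makes these two quantities equal, so the identity $\min = 2\,{\rm mc}(\widehat A)$ is preserved. There is no real obstacle here --- essentially all of the combinatorial work has already been done in Theorem \ref{th:hardness}, and the only observations needed are the decoupling (trivial) and the equality of controllability- and observability-minima for symmetric matrices (immediate from the coincidence of left and right eigenvectors).
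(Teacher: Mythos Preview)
Your argument is correct, and it takes a genuinely different route from the paper's own proof. The paper does \emph{not} use the symmetric matrix $\widehat A$; instead it works with the original (non-symmetric) matrix $A = A(\mathcal{C}) = V^{-1}\,{\rm diag}(1,\ldots,m+p+1)\,V$ from the first part of Theorem~\ref{th:hardness}, explicitly computes $V^{-1}$ entry by entry, and observes that columns $m+1,\ldots,m+p$ of $V^{-1}$ each have exactly one nonzero entry (on the diagonal). This forces any observability matrix $C$ to have at least $p$ nonzero entries, and $p$ always suffice. Hence the observability side has a \emph{fixed, trivially computable} optimum, and the hardness of the joint problem is inherited solely from the controllability side.

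Your approach is more economical: by passing to the symmetric $\widehat A$, the left and right eigenvectors coincide, so the observability minimum equals the controllability minimum and the joint optimum is exactly $2\,{\rm mc}(\widehat A)$ (invoking Lemma~\ref{equiv} to equate the diagonal and vector minima, since $\widehat A$ has distinct eigenvalues). This avoids the explicit inversion of $V$ and the case analysis on its nonzero pattern. The paper's approach, on the other hand, stays with the smaller and more concrete matrix $A(\mathcal{C})$ and does not need the third-part construction of $\widehat A$; it also makes transparent that the observability side contributes nothing to the hardness. Both reductions are valid and short; yours leans on symmetry, the paper's leans on an explicit structural computation.
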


\begin{proof} Consider the matrix $V$ which was constructed in the proof of Theorem \ref{th:hardness}. We first argue that $V^{-1}$ has the same pattern of nonzero entries as $V$, with the exception of the final column of $V^{-1}$, 
which is nonzero in every entry. We will show this by explicitly computing $V^{-1}$. 

Indeed, for $i = 1, \ldots,m$ we have 
\[ (Vx)_i = 2x_i + x_{m+p+1} \] and moreover $(Vx)_{p+m+1}  = x_{p+m+1}$ so that the $i$'th row of $V^{-1}$ is immediate: it has a $1/2$ in the $i,i$'th place and a $-1/2$ in the $i,m+p+1$'th place, and zeros elsewhere.

Next for $i=m+1, \ldots, m+p$, we have 
\[ (Vx)_i = (m+1)x_i + \sum_{j \in C_i} x_j \] where $C_i$ is the corresponding set of the collection ${\cal C}$. Consequently, $V^{-1}$ has a 
$1/(m+1)$ in the $i,i$'th spot, a $-1/(2(m+1)$ in the $i,j$'th place where $j \in C_i$, and $|C_i|/(2(m+1))$ in the $i,m+p+1$'th place; every other entry 
of the $i'$h row is zero. Finally, the last row of $V^{-1}$ is clearly $[0, 0, \ldots, 0, 1]$. 

This explicit computation shows that $V^{-1}$ has the same pattern of nonzero entries as $V$, with the exception of the final column of $V^{-1}$ which
is nonzero in every entry. Now for the system of Eq. (\ref{obs_system}) to be observable, we need to satisfy the PBH conditions, namely
that each column of $V^{-1}$ is not orthogonal to some row of $C$. By considering columns $m+1, \ldots, m+p$, we immediately see that 
$C$ needs to have at least $p$ nonzero entries; moreover, it is  easy to see that $p$ entries always suffice, either under the condition that $C \in \R^{n \times n}$ is diagonal or that its dimensions are $l_2 \times n$. 

Thus the problem of minimizing the number of nonzero entries in both $B$ and $C$ to make Eq. (\ref{obs_system}) both observable 
and controllable reduces to finding the sparsest $B$ to make it controllable. We have already shown that this problem is NP-hard. 
\end{proof}

\section{Approximating minimal controllability\label{sec:alg}}

 We will now consider the problem of controlling a linear system with an input which does not affect too many variables in the
system.  Our goal in this section will be to prove Theorem \ref{th:alg}, showing that we can match
the lower bounds of Theorem \ref{th:hardness} by returning a $c' \log n$ approximate solution for the minimal controllability problems of
finding the sparsest elements of ${\cal B}$ and ${\cal B}'$. 

 These approximation guarantees will be achieved by simple greedy heuristics 
which repeatedly add nonzero elements to the input to maximize the rank increase for the controllability matrix. The motivation
lies in the standard greedy algorithm for hitting set, which also achieves a multiplicative logarithmic approximation by  repeatedly adding elements which
appear in as many of the remaining sets as possible. Since the previous section drew on connections between  hitting sets and  controllability,  it is reasonable to guess that a similar techniques might achieve a comparable bound for minimal controllability.

Let us adopt the following conventions. We will refer to the problem of finding the sparsest element in ${\cal B}$ as the minimal vector 
controllability problem. We will refer to the problem of finding the sparsest element in ${\cal B}'$ as the minimal diagonal controllability 
problem. 

\subsection{The minimal vector controllability problem\label{vector}} 

For simplicity, let us adopt the notation $|b|$ for the number of nonzero entries in the vector $b$ (and similarly, for matrices, $|B|$ is 
the number of nonzero entries of the matrix $B$). Our first step will be
to describe a randomized algorithm for the minimal vector controllability problem which assumes the ability to set entries of the
vector $b$
to Gaussian random numbers. This is not a plausible assumption: in the real-world, we can generate random bits, rather than
infinite-precision random numbers. Nevertheless, this assumption considerably simplifies the 
proof and clarifies the main ideas involved. After giving an algorithm for approximating minimal controllability
with this unrealistic assumption in place, we will show how to modify this algorithm to work without this assumption. The final
result is a  a deterministic polynomial-time algorithm for $c'\log n$-approximate controllability.

We begin with a statement of our randomized algorithm in the box below (we will refer to this algorithm
henceforth as {\em the randomized algorithm}). The algorithm has a very simple
structure: given a current vector $b$ it tries to update it by setting one of the zero entries to an independently
generated standard Gaussian. It picks the index which maximizes the increase in the rank of the 
controllability matrix $C(A,b) = \begin{pmatrix} b & Ab & A^2 b & \cdots & A^{n-1} b \end{pmatrix}$. 
When it is no longer able to increase the rank of $C(A,b)$ by doing so, it stops.

\begin{algorithm} \begin{enumerate} \item Initialize $b$ to the zero vector and set $c^* = 1$.
\item {\bf While} $c^*>0$,
\begin{enumerate} 

\item {\bf For} $j=1, \ldots, n$, 
\begin{enumerate} \item If $b_j \neq 0$, set $c(j) = 0$. \item  If $b_j = 0$, set $\widetilde{b} = b + X(j) {\bf e}_j$ where $X(j)$ is i.i.d. standard normal,
and set $c(j) = {\rm rank}(C(A, \widetilde b)) - {\rm rank}(C(A,b))$. 
\end{enumerate}
$~${\bf end}
\item Pick $j^* \in \arg \max_j c(j)$ and set $c^* = c(j^*)$. 
\item If $c^*>0$, set $b \leftarrow b + X(j^*) {\bf e_{j^*}}$.
 \end{enumerate} 
$~${\bf end}
\item Output $b$. 
\end{enumerate}
\end{algorithm}

We will show that if ${\cal B}$ is nonempty, then
this algorithm returns a vector in ${\cal B}$ with probability $1$. Moreover, the number of nonzero entries in a vector 
returned by this algorithm approximates the sparsest vector in ${\cal B}$ by a multiplicative factor of $c' \log n$, for some
$c'>0$.

 {\em Without loss of generality, we will be assuming in the remainder of Section \ref{vector}
that each eigenspace of $A$ is one-dimensional}: if some eigenspace of $A$ has dimension larger than $1$ then  ${\cal B}$ is empty and all the results we will prove here about the minimal vector controllability problem (including Theorem \ref{th:alg}) are vacuously true. 

Let us adopt the notation $\lambda_1, \ldots, \lambda_k$ for the distinct eigenvalues of $A$. Let $T$ be a matrix 
that puts $A$ into Jordan form, i.e., $T^{-1} A T = J$, where $J$ is the Jordan form of $A$.  Due to the 
assumption that every eigenspace of $A$ is one-dimensional, we have that $J$ is block-diagonal with exactly $k$ blocks.

Without loss of generality, let us assume that the $i$'th block is associated with the eigenvalue $\lambda_i$. Moreover, let us denote
its dimension by $d_i$, and, finally, let us introduce the notation $t(i,j)$ to mean the 
$d_1 + d_2 + \cdots + d_{i-1} + j$'th row of $T^{-1}$. Observe that for fixed $i$, the collection $\{t(i,j) ~|~ j = 1, \ldots, d_i \}$ 
comprise the rows of $T^{-1}$ associated with the $i$'th block of $J$.  Naturally, the entire collection  $${\cal T} = \{ t(i,j),~ i=1, \ldots, k, ~j = 1, \ldots, d_i \}$$ is
a basis for $\R^n$. For a vector $v \in \R^n$ we use $v[i]$ to denote the vector in $\R^{d_i}$ whose $j$'th entry equals the
$d_1 + \cdots + d_{i-1} + j$'th entry of $v$. 
We will use the standard notation $\langle x, y \rangle = x^T y$ for the inner product of the vectors $x$ and $y$.
We will say that the vector $t(i,j) \in {\cal T}$ is {\em covered} by the vector $b$ if $\langle t(i,j'), b \rangle \neq 0$ for some $j' \leq d_i$ such that $j '\geq j$; else, we will say that $t(i,j)$ is {\em uncovered} by $b$. 

Our first lemma of this section provides a combinatorial interpretation of the rank of the controllability matrix in terms of the
number of covered vectors $t(i,j)$. This is a slight modification of the usual textbook characterization of controllability in terms of the Jordan form. While the lemma and its proof are quite elementary, we have been unable to find a clean reference and so we provide a proof to be
self-contained. Recall that we are assuming in this subsection that the eigenspaces of $A$ are all one-dimensional. 

\begin{lemma} \label{covered-rank} ${\rm rank}(C(A,b))$ equals the number of covered $t(i,j) \in {\cal T}$. 
\end{lemma}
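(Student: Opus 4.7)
My plan is to reduce to the Jordan form of $A$ and then handle one block at a time, where the problem becomes a transparent statement about the shift operator acting on a single vector.

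First, write $A = TJT^{-1}$ with $J$ in Jordan form. Since $A^s b = T J^s T^{-1} b$, multiplication by $T$ gives a bijection between the columns of $C(A,b)$ and those of $C(J, T^{-1}b)$, so
\[
{\rm rank}(C(A,b)) = {\rm rank}(C(J, \tilde b)), \quad \tilde b := T^{-1} b.
\]
By the one-dimensional-eigenspace assumption, $J$ is block diagonal with exactly one Jordan block $J_i$ of size $d_i$ per distinct eigenvalue $\lambda_i$. Each Krylov iterate $J^s \tilde b$ decomposes according to the block structure, so
\[
{\rm rank}(C(J, \tilde b)) = \sum_{i=1}^{k} {\rm rank}(C(J_i, \tilde b[i])).
\]
This reduces the lemma to analyzing a single Jordan block.

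Next, for a fixed $i$, write $J_i = \lambda_i I + N_i$ where $N_i$ is the nilpotent upper-shift matrix. Since powers of $J_i$ lie in the algebra generated by $N_i$, and vice versa, we have ${\rm span}\{J_i^s \tilde b[i]\}_{s\ge 0} = {\rm span}\{N_i^s \tilde b[i]\}_{s\ge 0}$. The key observation is that $N_i$ shifts entries of a vector one position upward (and drops the top entry). Define $j^*(i)$ to be the largest index $j'\le d_i$ with $\tilde b[i]_{j'} \neq 0$, or $0$ if $\tilde b[i]=\mathbf 0$. Then $N_i^s \tilde b[i]$ has its last nonzero entry at position $j^*(i)-s$ for $0 \le s \le j^*(i)-1$, and equals zero for $s \ge j^*(i)$. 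These vectors are linearly independent because the index of their last nonzero entry is strictly decreasing, and no higher power contributes anything new. Hence ${\rm rank}(C(J_i, \tilde b[i])) = j^*(i)$.

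Finally, translate this back to the ``covered'' terminology. By definition of $t(i,j')$, the quantity $\langle t(i,j'), b\rangle$ is exactly the entry of $T^{-1}b$ in the $(d_1+\cdots+d_{i-1}+j')$-th position, i.e., $\tilde b[i]_{j'}$. Therefore $j^*(i)$ is the largest $j'\le d_i$ with $\langle t(i,j'), b\rangle \neq 0$, and the $t(i,j)$ covered by $b$ in block $i$ are precisely those with $j \le j^*(i)$, of which there are $j^*(i)$. Summing over $i$ yields
\[
{\rm rank}(C(A,b)) = \sum_{i=1}^k j^*(i) = \#\{\text{covered } t(i,j) \in \mathcal T\},
\]
as claimed. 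I do not anticipate a serious obstacle; the only point demanding care is keeping the direction of the shift (upward) straight, so that ``covered'' (indices $j'\ge j$) matches the fact that $N_i$ maps the last nonzero entry to lower indices and thereby generates exactly $j^*(i)$ independent iterates.
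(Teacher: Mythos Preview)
Your approach mirrors the paper's almost step for step: reduce to the Jordan form, split the rank across the Jordan blocks, replace each $J_i$ by its nilpotent part, and read off the per-block rank as the largest index $j^*(i)$ at which $\tilde b[i]$ is nonzero. The paper does exactly these four things (it phrases the third step as ``column operations'' rather than ``same algebra generated,'' but the content is identical).

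There is, however, one genuine gap. The sentence ``Each Krylov iterate $J^s\tilde b$ decomposes according to the block structure, so ${\rm rank}\,C(J,\tilde b)=\sum_i {\rm rank}\,C(J_i,\tilde b[i])$'' does not follow from block-diagonality alone. Block-diagonality tells you that $C(J,\tilde b)$ is a row-stacking of the matrices $\bigl(\tilde b[i],\,J_i\tilde b[i],\,\ldots,\,J_i^{n-1}\tilde b[i]\bigr)$, which yields only the inequality ${\rm rank}\,C(J,\tilde b)\le\sum_i{\rm rank}\,C(J_i,\tilde b[i])$. For equality you must show that the column space of $C(J,\tilde b)$ fills the entire direct sum of the block column spaces, i.e., that for each $i$ you can hit an arbitrary vector of the form $p_i(J_i)\tilde b[i]$ while simultaneously zeroing out all other blocks. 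This is precisely where the distinctness of the $\lambda_i$ enters: since the polynomials $(x-\lambda_i)^{d_i}$ are pairwise coprime, the Chinese Remainder Theorem lets you pick a single $p$ with $p\equiv p_i \bmod (x-\lambda_i)^{d_i}$ independently for each block. Without distinct eigenvalues the equality is simply false (take $J=I_2$ and $\tilde b=(1,1)^T$: the left side is $1$, the right side is $2$). The paper devotes a full paragraph to exactly this step, arguing via Laplace transforms of $q^T e^{tJ}\tilde b$ and separating poles; the CRT argument sketched above is shorter and cleaner, but you do need to supply something here.
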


\begin{proof} Let $U_d$ denote the upper-shift operator on $\R^d$ 
and let $J_i$ be the $i$'th diagonal block of $J$ so that $J_i = \lambda_i {\bf I}_{d_i} + U_{d_i}$. 
 Defining $b_i = (T^{-1}b)[i]$, we  have that  \begin{small}
\begin{eqnarray*} {\rm rank}~ C(A,b) & = & {\rm rank} ~C(T^{-1} A T, T^{-1} b) \\ 
& = & {\rm rank}~ C(J, T^{-1} b) \\ 
& = & \sum_{i=1}^k {\rm rank}~  \begin{pmatrix} b_i & J_i b_i  & J_i^2 b_i & \cdots & J_i^{d_i-1} b_i \end{pmatrix} \\ 
 \end{eqnarray*} \end{small} The first two equalities in the above sequence are obvious, while the last equality is justified as follows. 
Observe that the left-nullspace of $C(J, T^{-1} b)$ is comprised of all the vectors $q$ satisfying $q^T e^{tJ} T^{-1} b = 0$; this is proven in the usual way using the Cayley-Hamilton theorem in one direction and repeatedly differentiating the power series for $e^{tJ}$ in the other. The last expression can be rewritten as $\sum_{i=1}^k q[i]^T e^{t J_i} b_i = 0$. We can take the Laplace transform, which we know  exists for ${\rm Re}~s > \max_i {\rm Re} ~\lambda_i$,  to obtain that $\sum_{i=1}^k q[i]^T (sI - J_i)^{-1} b_i = 0$. Now
because different $J_i$ have different eigenvalues, we argue this is only possible if each term in the above sum is zero.  Indeed, consider what happens when $s$ approaches from the right an eigenvalue $\lambda_j$ with maximal real part.  If $q[j]^T (sI-J_j)^{-1} b_j$ is not identically zero for all $s \neq \lambda_j$ in the complex plane, we obtain that $q[j]^T (sI-J_j)^{-1} b_j$ blows up as $s$ approaches $\lambda_j$, while all other terms 
$q[i]^T (sI-J_i)^{-1} b_i$ remain bounded, which contradicts  $\sum_{i=1}^k q[i]^T (sI - J_i)^{-1} b_i = 0$. We thus have that $q[j]$ is in the left-nullspace of $e^{tJ_j} b_j$. By repeating this argument, we obtain that each $q[i]^T$ is in the left-nullspace of $C(J_i,b_i)$ for each $i=1, \ldots, k$,
and therefore the left-nullspace of $C(J,T^{-1} b)$ is a direct sum of the left-nullspaces of $C(J_i, b_i)$, which justifies the last equality above.

Next, since rank is unaffected by column operations, we have \begin{small}
\[ {\rm rank}(C(A,b)) = \sum_{i=1}^k {\rm rank} \begin{pmatrix} b_i & U_{d_i} b_i  & U_{d_i}^2 b_i & \cdots & U_{d_i}^{d_i-1} b_i \end{pmatrix} \] \end{small} But each term in this sum is just the largest index of a nonzero entry of the vector $b_i$. That is, 
letting $z_i$ be the largest $j \in \{ 1, \ldots, d_i \}$ such that the $j$'th entry of $b_i$ is nonzero, we get 
\[ {\rm rank}(C(A,b)) = \sum_{i=1}^k z_i \] This is exactly the number of covered elements in ${\cal T}$. 
\end{proof}

Our next lemma uses the combinatorial characterization we have just derived to obtain a performance bound
for the randomized algorithm we have described. 

\begin{lemma} \label{rand-lemma} Suppose that ${\cal B}$ is nonempty and let $b'$ be a vector in ${\cal B}$. Then with probability $1$ the randomized 
algorithm outputs 
a vector in ${\cal B}$ with $O(|b'| \log n)$ nonzero entries.
\end{lemma}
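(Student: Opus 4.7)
The plan is to reduce the analysis of the randomized algorithm to the classical greedy set cover bound, using the combinatorial characterization of rank provided by Lemma~\ref{covered-rank}. That lemma identifies ${\rm rank}(C(A,b))$ with the number of vectors $t(i,j) \in {\cal T}$ that are covered by $b$, so each outer iteration of the algorithm selects an index $l$ that maximizes the number of newly covered elements.

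First I would set up the underlying set cover instance explicitly. Take the universe to be ${\cal T}$, of size $n$, and for each index $l \in \{1,\ldots,n\}$ define
$$S_l \;=\; \{\, t(i,j) \in {\cal T} \;:\; \exists\, j' \geq j \text{ with } (t(i,j'))_l \neq 0 \,\}.$$
A short calculation shows that after the update $b \leftarrow b + X(l)\,{\bf e}_l$, an element $t(i,j)$ is covered by the new vector if and only if it was already covered by $b$ or $t(i,j) \in S_l$, except on a measure-zero event corresponding to exact cancellation of the fresh Gaussian contribution against the current partial sum. Since the algorithm draws only polynomially many Gaussians, a union bound gives that with probability $1$ the set of covered elements at every stage equals $\bigcup_{l \in {\rm supp}(b)} S_l$. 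Conditioned on this probability-one event, the randomized algorithm coincides exactly with the textbook greedy algorithm for the set cover instance $({\cal T}, \{S_l\}_{l=1}^n)$.

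Next I would bound the optimum of this set cover instance by $|b'|$. Because $b' \in {\cal B}$, every $t(i,j)$ must be covered by $b'$, and a nonzero inner product $\langle t(i,j'), b' \rangle$ forces some $l \in {\rm supp}(b')$ to have $(t(i,j'))_l \neq 0$; hence $\bigcup_{l \in {\rm supp}(b')} S_l = {\cal T}$, so the set cover optimum is at most $|b'|$. The classical analysis of greedy set cover then gives that the algorithm terminates with a cover of size at most $H_n |b'| = O(|b'| \log n)$, where $H_n = \sum_{k=1}^n 1/k$. The same argument also rules out premature termination: whenever some $t(i,j)$ is still uncovered, some index in ${\rm supp}(b')$ would cover it and thus produce a strictly positive rank increase, so the stopping condition $c^* = 0$ can only trigger once all of ${\cal T}$ is covered, at which point Lemma~\ref{covered-rank} yields ${\rm rank}(C(A,b)) = n$ and $b \in {\cal B}$.

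The only real subtlety, and in my view the main point to handle carefully, is the measure-theoretic bookkeeping that justifies replacing the analytic condition ``$\langle t(i,j'), b\rangle \neq 0$'' by the purely combinatorial condition defining $S_l$. Without it the greedy-rank and greedy-set-cover algorithms do not literally coincide and the clean $H_n$ approximation bound does not immediately apply; with it in hand, everything reduces to a direct invocation of the standard greedy set cover guarantee.
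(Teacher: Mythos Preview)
Your proposal is correct and follows essentially the same approach as the paper: both reduce the analysis to the greedy set-cover guarantee via Lemma~\ref{covered-rank}, using a pigeonhole/averaging argument on the cover induced by ${\rm supp}(b')$. The only difference is presentational---the paper proves the $(1-1/|b'|)$ multiplicative decrease of the uncovered set inline, whereas you explicitly name the set-cover instance $({\cal T},\{S_l\})$ and invoke the classical $H_n\cdot{\rm OPT}$ bound; the measure-zero bookkeeping you flag is exactly the ``with probability $1$'' caveat the paper attaches to Eq.~(\ref{fdecrease}).
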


\begin{proof} Since $b' \in {\cal B}$, we know that every $t(i,j) \in {\cal T}$ is covered by $b'$. Thus, in particular, if ${\cal F}$ is any subset of ${\cal T}$, then every $t(i,j) \in {\cal F}$ 
is covered by $b'$. From this we claim that we can draw the following conclusion: for any ${\cal F} \subset {\cal T}$ there exist an index $j 
\in \{1, \ldots, n\}$ such that the choice $b=e_j$ covers at least $|{\cal F}|/|b'|$ of the vectors in ${\cal F}$. 

Indeed, if this were not true, then for every index $j$ the number of $t(i,j) \in {\cal F}$ such that some $t(i,j')$ with $j \leq j' \leq d_i$ has a nonzero
$j$'th entry is 
strictly fewer than than $|{\cal F}|/|b'|$. It would then follow that there is at least one $t(i,j) \in {\cal F}$ such that no $t(i,j'), j \leq j' \leq d_i$ has a nonzero entry
at any index where $b'$ is nonzero. But this contradicts every $t(i,j)$ being covered by $b'$.

Let us consider the $l$'th time when the randomized algorithm enters step 2.a. Let ${\cal F}(l)$ be the number 
of uncovered vectors in ${\cal T}$ at this stage. We have just shown that there exists some index $j$ such that at least $|{\cal F}(l)|/|b'|$ of 
the vectors in ${\cal F}(l)$ have nonzero $j$'th entry. With probability $1$, all of these vectors will become covered when 
we put an independent standard Gaussian in $j$'th entry, and also with probability $1$, no previously covered vector
becomes uncovered. Thus we can infer the following conclusions with probability $1$: first \begin{equation} 
\label{fdecrease} |{\cal F}(l+1)| \leq |{\cal F}(l)| - \frac{|{\cal F}(l)|}{|b'|}, \end{equation} since the index picked by the algorithm covers at least as many vectors as the index $j$; and also since $(1 - \frac{1}{x})^x \leq e^{-1}$ for all $x \geq 1$, we have that $|{\cal F}(l)|$ shrinks by a factor of at least $e^{-1}$ every $|b'|$
steps. After $O(|b'| \log n)$ steps, $|{\cal F}(l)|$ is strictly below $1$, so it must equal zero. This proves the lemma.  
\end{proof}

By choosing $b'$ to be the sparsest vector in ${\cal B}$ and applying the lemma we have just proved, we have the approximation guarantee that we
seek: the randomized algorithm finds a vector in ${\cal B}$ which is an $O(\log n)$ approximation to the sparsest vector. 

We next revise our  algorithm by removing the assumption that we can generate infinite-precision random numbers. The new
algorithm is given in a box below, and we will refer to it henceforth as {\em the deterministic algorithm}. 

\begin{algorithm} \begin{enumerate} \item Initialize $b$ to the zero vector and set $c^* = 1$.
\item {\bf While} $c^*>0$,
\begin{enumerate} 

\item {\bf For} $j=1, \ldots, n$, 
\begin{enumerate} \item If $b_j \neq 0$, set $c(j,p) = 0$ for all $p=1, \ldots, 2n+1$. 
\item  If $b_j = 0$, then for $p=1, \ldots, 2n+1$, set $\widetilde{b}_{j,p} = b + p {\bf e}_j$ and set $c(j,p) = {\rm rank}(C(A, \widetilde b_{j,p})) - {\rm rank}(C(A,b))$. 
\end{enumerate}
$~${\bf end}
\item Let $(j^*,p^*) \in \arg \max_{ (j,p) } c(j,p)$ and let $c^* = c(j^*,p^*)$. 
\item If $c^*>0$, set $b \leftarrow b + p^* {\bf e_{j^*}}$.
 \end{enumerate} 
$~${\bf end}
\item Output $b$. 
\end{enumerate}
\end{algorithm}

The deterministic algorithm has a simple interpretation. Rather than putting a random entry in the $j$'th place and 
testing the corresponding increase in the rank of the controllability matrix, it instead tries $2n+1$ different numbers to 
put in the $j$'th place, and chooses the one among them with the largest corresponding increase in rank. The main idea is that
to obtain the ``generic''' rank increase associated with a certain index we can either put a random number in the $j$'th
entry or we can simply test what happens when we try enough distinct values in the $j$'th entry.
 
We are  now finally ready to begin the proof of Theorem \ref{th:alg}.

\begin{proof}[Proof of Theorem \ref{th:alg}, first part] We will argue that  when the deterministic and the randomized algorithms enter the {\bf while} loop with identical $A,b$, we will
have that, for each $j \in \{1, \ldots, n\}$, $c(j)$ in the randomized algorithms equals $\max_p c(j,p)$ in the deterministic algorithm
with probability $1$.  Since this implies that Eq. (\ref{fdecrease}) holds for each run of the {\bf while} loop in the 
deterministic algorithm, we can simply repeat the proof of Lemma \ref{rand-lemma} to obtain that the deterministic algorithm obtains an $O( \log n)$ approximation. That the algorithm is polynomial follows since it terminates after at most $n$ steps of the {\bf while} loop, while each step requires at most $O(n^2)$ rank calculations, each of which may be done in polynomial time with Gaussian elimination. 

It remains to justify the claim about the equality of $c(j)$ in the randomized algorithm and $\max_p c(j,p)$ in the 
deterministic algorithm for each $j$. Fix $A,b,j$, and consider $p_m(t)$ which we define to be the sum of the squares  of all the $m \times m$  minors of the matrix $C(A,b+t {\bf e}_j)$. Naturally, $p_m(t)$ is a nonnegative polynomial in $t$ of degree at most $2n$.  Moreover, the statement $p_m(t) > 0$ is equivalent to the assertion that $C(A, b + t {\bf e}_j)$ has rank at least $m$. Note that because $p_m(t)$ is a polynomial in $t$ and therefore
has finitely many roots if not identically zero, we have that as long as $p_m(t)$ is not
identically zero, $p_m(t)$ evaluated a standard Gaussian is positive with probability $1$. Consequently, whenever the 
randomized algorithm enters step 2.a.ii and computes $c(j)$, we have that with probability $1$ this $c(j)$ equals $-{\rm rank}~ C(A,b)$
plus the largest $m \in \{1, \ldots, n\}$ such that $p_m(t)$ is not identically zero.

Similarly, consider the deterministic algorithm as it enters step 2.a. Observe that as long as $p_m(t)$ is not identically zero, plugging in one of $1,2,\ldots,2n+1$ into $p_m(t)$ will produce a positive number because 
$p_m(t)$ has degree at most $2n$. As a consequence, $\max_k c(j,k)$ equals $-{\rm rank}~ C(A,b)$ plus the largest $m$
such that $p_m(t)$ is not identically zero.
\end{proof}

\subsection{The minimal diagonal controllability problem} We now consider the related question of finding the sparsest diagonal matrix in 
${\cal B}'$. Our approach will be identical to the one we took in the case of the vector controllability problem: we will be adding nonzero 
elements to the diagonal matrix $B$ in a greedy way that increases the rank of the controllability matrix. We will prove that, in this case as well, this
yields an $O( \log n)$ approximation to the sparsest matrix in ${\cal B}'$. 

Since much of this section is a nearly identical repetition of the previous section, we will be correspondingly briefer. Our algorithm is stated in the following box;
we will refer to it as the {\em diagonal algorithm. }

\begin{algorithm} \begin{enumerate} \item Initialize $B$ to the zero matrix and set $c^* = 1$.
\item {\bf While} $c^*>0$,
\begin{enumerate} 

\item {\bf For} $j=1, \ldots, n$, 
\begin{enumerate} \item If $B_{jj} \neq 0$, set $c(j)=0$. \item  If $B_{jj} = 0$, set $\widetilde{B} = B + {\bf e}_j {\bf e}_j^T$, and  $c(j) = {\rm rank}(C(A, \widetilde B)) - {\rm rank}(C(A,B))$. 
\end{enumerate}
$~${\bf end}
\item Let $j^* \in \arg \max_j c(j)$ and let $c^* = c(j^*)$. 
\item If $c^*>0$, set $B \leftarrow B + {\bf e_{j^*}} {\bf e_{j^*}}^T$.
 \end{enumerate} 
$~${\bf end}
\item Output $B$. 
\end{enumerate}
\end{algorithm}

We will shortly conclude the proof of Theorem \ref{th:alg} by showing that the diagonal algorithms finds the approximation to the sparsest element in
${\cal B}'$ guaranteed by that theorem. In the course of the proof we will rely on the following elementary lemma. 

\begin{lemma} \label{drop} Suppose $S,X,Y$ are subspaces. If \begin{equation} \label{in1}  {\rm dim}~ (S \cap X \cap Y) \leq {\rm dim}~ (S \cap X) - a \end{equation} then 
\begin{equation} \label{in2}  {\rm dim}~ (S \cap Y) \leq {\rm dim}~ (S) - a. \end{equation}
\end{lemma}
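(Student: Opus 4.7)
The plan is to recognize this as a submodularity-type inequality for subspace dimension, which follows directly from the classical Grassmann dimension formula $\dim(U+W) + \dim(U \cap W) = \dim(U) + \dim(W)$.

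First I would restate what needs to be proved in the equivalent form
\[
\dim(S \cap X) - \dim(S \cap X \cap Y) \;\leq\; \dim(S) - \dim(S \cap Y),
\]
so that the hypothesis immediately yields the conclusion by transitivity of $\leq$. Equivalently, this is the inequality
\[
\dim(S \cap X) + \dim(S \cap Y) \;\leq\; \dim(S) + \dim(S \cap X \cap Y).
\]

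Next I would apply the Grassmann formula to the pair of subspaces $U = S \cap X$ and $W = S \cap Y$. Two elementary observations then finish the proof: (i) the intersection $U \cap W = (S \cap X) \cap (S \cap Y)$ is exactly $S \cap X \cap Y$, and (ii) the sum $U + W = (S \cap X) + (S \cap Y)$ is contained in $S$, since both summands lie in $S$, so $\dim(U+W) \leq \dim(S)$. Substituting these two facts into the Grassmann identity gives
\[
\dim(S \cap X) + \dim(S \cap Y) = \dim(U+W) + \dim(S \cap X \cap Y) \leq \dim(S) + \dim(S \cap X \cap Y),
\]
which is the rearranged inequality above.

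Finally I would combine this with the hypothesis: if $\dim(S \cap X \cap Y) \leq \dim(S \cap X) - a$, then
\[
\dim(S \cap Y) \leq \dim(S) + \dim(S \cap X \cap Y) - \dim(S \cap X) \leq \dim(S) - a,
\]
as required. There is no real obstacle here; the only ``trick'' is spotting that the desired statement is precisely the modularity of dimension applied to the pair $(S \cap X, S \cap Y)$, after which the proof is a two-line calculation.
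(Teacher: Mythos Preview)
Your proof is correct and uses essentially the same tool as the paper, namely the Grassmann dimension formula $\dim(U)+\dim(W)=\dim(U+W)+\dim(U\cap W)$. The only cosmetic difference is the pair of subspaces to which the formula is applied: the paper applies it to $(S\cap X,\,Y)$ and to $(S,\,Y)$ and then uses the inclusion $(S\cap X)+Y\subset S+Y$, whereas you apply it once to $(S\cap X,\,S\cap Y)$ and use the inclusion $(S\cap X)+(S\cap Y)\subset S$. Both routes are one-line applications of the same identity; yours packages the result directly as the submodularity inequality, which is arguably a touch cleaner.
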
 \begin{proof}  It is easy to see that if $A,B$ are subspaces, then 
\begin{equation} \label{intdim}   {\rm dim}~(A)  -  {\rm dim}~ (A \cap B) = {\rm dim}~ (A+B) -  {\rm dim}~ (B)  
\end{equation}  This is Theorem 2.18 in \cite{axler}. 

Now  from Eq. (\ref{intdim}), we have that Eq. (\ref{in1}) is true if and only if 
\[ a \leq  {\rm dim}~ (S \cap X + Y) - {\rm dim}~ Y  \] while again by Eq. (\ref{intdim}), we have that Eq. (\ref{in2}) is true if and only if 
\[ a \leq  {\rm dim}~ (S + Y ) - {\rm dim}~ Y   \] The lemma now follows from the  fact that  $S \cap X + Y \subset S + Y$. 
\end{proof}

\begin{proof}[Proof of Theorem \ref{th:alg}, second part] We will now prove that the diagonal algorithm above produces an $O(\log n)$ approximation to the sparsest matrix in ${\cal B}'$. Let us adopt the notation that $B' \in {\cal B}'$ is one such sparsest matrix; then the smallest number of nonzero
elements needed is $|B'|$. 

 We first remark that, by elementary considerations, given any diagonal matrix $D \in \R^{n \times n}$, we have that \begin{equation} \label{rankchar} {\rm rank}(C(A,D)) = \dim {\rm span} \{e^{At} e_{j} ~|~ D_{jj} \neq 0, t \in \R \},\end{equation} where the notion of a  span of a (possibly infinite) set is the usual one: it is  the collection of all finite linear combinations of its elements. 

Now let ${\cal J}$ be the set of all indices $j \in \{1, \ldots, n\}$ with $B_{jj}' \neq 0$. Since $B' \in {\cal B}'$ by assumption, we have that ${\rm span} ~ \{ e^{At} e_{j} ~|~ j \in {\cal J}, t \in \R \} = \R^n$. Obviously, we may restate this as follows: for any subspace $S$ of $\R^n$, we have that 
\[ S \cap {\rm span}~ \{ e^{At} e_j ~|~ j \in {\cal J}, t \in \R\}^{\perp} = \emptyset \] Since for any two sets $A,B$
\begin{equation} \label{spandim} {\rm span}~ (A \cup B)^{\perp} = {\rm span}~ (A)^\perp \cap {\rm span} (B)^{\perp} 
\end{equation} 
we have that 
\[ S \cap \bigcap_{j \in {\cal J}} {\rm span}~ \{ e^{At} e_j ~|~  t \in \R\}^{\perp}  = \emptyset \] Now consider what happens to the dimension of the subspace on the left-hand side of this equation as $j$ successively runs over all $|B'|$ elements of ${\cal J}$. Since the dimension drops from ${\rm dim}~(S)$ to $0$, it follows there is some index $j$ when the dimension drops by at least ${\rm dim}(S)/|B'|$. Consequently, there exists an index $j$ such that, denoting by ${\cal J}/j$ to be all the indices in ${\cal J}$ strictly less than $j$,  we have that

\begin{small}
\[  {\rm dim} \left( S \cap {\rm span} \{ e^{At} e_i ~|~ i \in {\cal J}/j, t \in \R\}^\perp  \cap {\rm span} \{ e^{At} e_j ~|~ t \in \R\}^\perp \right) \] \[ \leq 
{\rm dim} \left( S \cap {\rm span} \{ e^{At} e_i ~|~ i \in {\cal J}/j, t \in \R\}^\perp \right) - \frac{{\rm dim}~S}{|B'|}  \]
\end{small}

By Lemma \ref{drop}, this implies
\begin{equation} \label{existindex} {\rm dim}~ \left( S \cap  {\rm span}\{  e^{At} e_j ~|~ t \in \R\}^{\perp} \right) \leq {\rm dim}~(S) - \frac{ {\rm dim}~ S}{|B'|} 
\end{equation}

With this fact in place, consider the diagonal algorithm; specifically, consider each time the algorithm begins the ``while'' loop. Let us fix one such time and let $B$ be the matrix constructed by the algorithm up to date. Define the subspace $S(B)$ as $S(B) = {\rm span}~ \{e^{At} e_j ~|~ B_{jj} \neq 0, t \in \R\}^{\perp}$. Clearly, \[ {\rm dim}~ S(B) = n - {\rm rank}~ C(A,B) \] 

Now by Eq. (\ref{existindex}) if $S(B)$ is non-empty then there is one index $j$ with the property that \begin{small}
\[ {\rm dim} \left( S(B) \cap {\rm span}\{  e^{At} e_j ~|~ t \in \R\}^{\perp} \right) \leq {\rm dim}~(S(B)) - \frac{ {\rm dim}~ S(B)}{|B'|} \] \end{small} Thus appealing to Eq. (\ref{spandim}), we obtain setting this $B_{jj}$ to any nonzero number will decrease ${\rm dim}~ S(B)$ by a multiplicative factor of $1-1/|B'|$. It follows that the diagonal algorithm above decreases $S(B)$ by at least this much, since it picks an entry to set nonzero which maximizes the rank increase of the controllability matrix. 

Thus we have that after $O(|B|')$ iterations, ${\rm dim}~ S(B)$ decreases by a constant factor, and after $O(|B'| \log n)$ iterations we must have ${\rm dim}(S(B)) < 1$ and consequently $S(B) = \emptyset$.  Appealing finally
to Eq. (\ref{rankchar}), we get that a diagonal matrix $B$ is found rendering the system controllable whose number of nonzero entries is $O(|B'| \log n)$. 
\end{proof}

\section{Experiments with Erdos-Renyi random graphs\label{simul}} We report here on some simulations in the setting of Erdos-Renyi random graphs. These
graphs have have been the subject of considerable focus within the network controllability literature (see \cite{pu, LSB11}) due to their
use as models of biological interconnection networks \cite{er_biology}

We  simulate the randomized algorithm for finding the sparsest vector in ${\cal B}$. We used the MATLAB ``randn'' 
command to approximately produce a Gaussian random variable. To obtain the matrices $A$, we generated 
a directed Erdos-Renyi random graph where each (directed) link was present with a probability of $2 \log n/n$. We constructed the adjacency
matrices of these graphs and threw out those which had two eigenvalues which were at most $0.01$ apart. The purpose of this was to rule 
out matrices which had eigenspaces that were more than one dimensional, and due to the imprecise nature of eigenvalue computations with MATLAB, 
testing whether two eigenvalues were exactly equal did not ensure this. As a result of this process, the matrix  
was controllable with some vector $b$.

We then ran the randomized algorithm of the previous section to find a sparse vector making the system controllable. We note that
a few minor revisions to the algorithm were necessary due to the ill-conditioning of the controllability matrices. In particular, with
$A$ being the adjacency matrix of a directed Erdos-Renyi graph on as few as $20$ vertices and $b$ a randomly generated vector, the MATLAB command ``rank(ctrb(A,b))'' often returns nonsensical results due to the large condition number of the controllability matrix. We 
thus computed the rank of the controllability matrix by initially diagonalizing $A$ and counting the number of eigenvectors orthogonal
to $b$. 

We ran our simulation up to number of nodes $n=100$, generating $100$ random graphs for each $n$ and we found that almost all the matrices could be controlled with a $b$ with just one nonzero
entry, and all could be controlled with a $b$ that had two nonzero entries. Specifically, out of the $10,000$ random adjacency matrices generated in this way, 
$9,990$ could be controlled with a $1$-sparse $v$ and the remaining $10$ could be controlled with a $2$-sparse $b$. 

We conclude that our randomized protocol can often successfully find extremely sparse vectors making the system controllable. 
We are unable at the present moment to give a theoretical justification of the excellent performance observed here. Thus the investigation of our randomized
heuristic in various ``generic'' settings (such as the one simulated here)  is  currently an open problem.

\section{Conclusions\label{sec:concl}} We have shown that it is NP-hard to approximate the minimum number of variables in the system
that need to be affected for controllability within  a factor of $c \log n$ for some $c>0$, and we have provided an algorithm which approximates it to a factor of $c' \log n$
for some $c' > 0$. Up to the difference in the constants between $c$ and $c'$, this resolves the polynomial-time
approximability of the minimal controllability problem. 

The study of minimal controllability problems is relatively recent and quite a few open questions remain. 
For example, it is reasonable
to anticipate that real-world networks may have certain ``generic'' features which considerably simplify the minimal controllability
problem. It would therefore be interesting to find algorithms for minimal controllability which always return the correct answers and
which ``generically'' finish in polynomial time. Finally, an understanding of more nuanced features of controllability (i.e., how the energy required to move a network
from one state to another depends on its structure) appears to be needed (see \cite{Motter1, bullo, add1, add2, add3} for work on this problem).

\begin{small}

\end{small}
\end{document}